\let\OLDthebibliography\thebibliography
\renewcommand\thebibliography[1]{
  \OLDthebibliography{#1}
  \setlength{\parskip}{3pt}
  \setlength{\itemsep}{0pt plus 0.3ex}
}
\newtheorem{theorem}{Theorem}[section]
\newtheorem{lemma}[theorem]{Lemma}
\newtheorem{conjecture}[theorem]{Conjecture}
\newtheorem{remark}[theorem]{Remark}
\newcommand{\OL}[0]{\overline} %
\newcommand{\HW}[1]{} 
\renewcommand{\t}{\tilde}
\renewcommand{\tilde}{\widetilde}
\newcommand{\p}{\partial}
\renewcommand{\d}{\delta}
\newcounter{pcounter}
\newcommand{\DS}{\displaystyle}
\newcommand{\EE}{{\mathcal E}}
\newcommand{\FF}{{\mathcal F}}
\newcommand{\OO}{{\mathcal O}}
\renewcommand{\SS}{{\mathcal S}}
\newcommand{\UU}{{\mathcal U}}
\newcommand{\R}{\mathbb{R}}
\newcommand{\C}{\mathbb{C}}
\newcommand{\bet}{\beta}
\newcommand{\gam}{\gamma}
\newcommand{\eps}{\epsilon}
\newcommand{\kap}{\kappa}
\renewcommand{\phi}{\varphi}
\newcommand{\ang}{\theta}
\newcommand{\Ome}{\Omega}
\DeclareMathOperator*{\spt}{supp}
\newcommand{\ddt}{\, \frac{d}{dt}}
\renewcommand{\iint}{\int\!\!\!\!\int}
\def\XXint#1#2#3{{\setbox0=\hbox{$#1{#2#3}{\int}$}
\vcenter{\hbox{$#2#3$}}\kern-.5\wd0}}
\newcommand{\upref}[2]{\hspace{-0.8ex}\stackrel{\eqref{#1}}{#2}} 
\newcommand{\lupref}[2]{\hspace{0ex} \stackrel{\eqref{#1}}{#2}} 
\definecolor{verylightblue}{rgb}{0.95, 0.95, 0.95}  
\definecolor{lightblue}{rgb}{0.7, 0.7, 1}
\definecolor{eqyellow}{rgb}{0.9375,0.8984,0.5469}
\definecolor{subeqyellow}{rgb}{1,0.9373,0.8353}
\definecolor{darkcyan}{rgb}{0.0, 0.45, 0.95} 
\definecolor{mygreen}{rgb}{0.3, 0.6, 0.3} 
\definecolor{verylightgreen}{rgb}{0.95, 0.95, 0.95} 
\definecolor{verydarkgreen}{rgb}{0, 0.5, 0}
\definecolor{darkgreen}{rgb}{0.1, 0.55, 0.1}  
\definecolor{mydarkgreen}{rgb}{0, 0.5, 0} 
\definecolor{mybrown}{rgb}{0.85, 0.4, 0.3}
\definecolor{verylightbrown}{rgb}{0.98, 0.72, 0.58}
\definecolor{verydarkbrown}{rgb}{0.44, 0.26, 0.26}
\definecolor{orange}{rgb}{1, 0.5, 0}
\definecolor{BurntOrange}{rgb}{1,0.356,0}
\definecolor{mydarkred}{rgb}{1,0.086,0.255}
\definecolor{RoseVYDP}{rgb}{0.84,0.086,0.255}
\definecolor{dgreen}{rgb}{0, 0.8, 0.5}     
\definecolor{CanaryBRT}{rgb}{1,0.76,0.26}
\definecolor{cyan}{rgb}{0, 1, 1}
\definecolor{verylightgray}{rgb}{0.95, 0.95, 0.95}
\definecolor{lightgray}{rgb}{0.8, 0.8, 0.8}
\definecolor{verylightred}{rgb}{1, 0.8, 0.78}
\definecolor{verylightyellow}{rgb}{0.99, 0.98, 0.5}
\newcommand{\ignore}[1]{{}}
\newcommand{\VEC}[2][r]{
  \gdef\@VORNE{1}
  \left(\hskip-\arraycolsep%
    \begin{array}{#1}\vekSp@lten{#2}\end{array}%
  \hskip-\arraycolsep\right)}
\def\vekSp@lten#1{\xvekSp@lten#1;vekL@stLine;}
\def\vekL@stLine{vekL@stLine}
\def\xvekSp@lten#1;{\def\temp{#1}%
  \ifx\temp\vekL@stLine
  \else
    \ifnum\@VORNE=1\gdef\@VORNE{0}
    \else\@arraycr\fi%
    #1%
    \expandafter\xvekSp@lten
  \fi}
\DeclareMathOperator{\supp}{supp}
\newcommand{\LAM}{{\mathfrak s}}
\title{Solutions of the thin film equation obtained in the limit of vanishing slip}
\subjclass[2020]{35Q35,35K65,35C20,35B25}
\keywords{Fluid dynamics, Degenerate Parabolic Equations, Thin--film Equation, Singular Limit}
\author{Hans Knüpfer}
\address{Institut für Angewandte Mathematik \& Interdisciplinary Center for Scientific Computing, Universität Heidelberg,  
Im Neuenheimer Feld 205, 69120 Heidelberg, Germany}
\email{knuepfer@uni-heidelberg.de}
\author{Juan J.~L. Velázquez}
\address{Institut für Angewandte Mathematik, Rheinische Friedrich-Wilhelms-Universität Bonn,  
Endenicher Allee 60, 53115 Bonn, Germany}
\email{velazquez@iam.uni-bonn.de}
\begin{document}

\begin{abstract}
  We analyze the evolution of thin liquid droplets in the lubrication
  approximation with different slip conditions at the liquid--solid
  interface. Motivated by the classical no-slip paradox which states that the
  Navier–Stokes equations with a no-slip boundary condition require unphysical
  infinite dissipation during droplet spreading, we focus on the limit of
  vanishing slip.  We show that in the no--slip limit three fundamentally
  different classes of limiting solutions are approached, each of them
  corresponding to a different scaling of the microscopic contact angle as the
  regularization parameter vanishes. These findings suggest that the thin-film
  equation with no slip supports a rich family of physically admissible
  solutions, provided one interprets the no--slip thin film equation as the
  asymptotic limit of models which regularized slip conditions. Even though the
  large apparent contact angles in some of these solutions seem incompatible
  with the lubrication approximation, a refined analysis shows that the
  underlying physical variables remain consistent with the assumptions for the
  lubrication approximation.
\end{abstract}
 
\renewcommand{\em}[1]{\textit{#1}}

\maketitle
\tableofcontents

\section{Introduction and main results}


\subsection{Introduction}

We consider the evolution of a liquid droplet on a solid substrate. In the
classical fluid dynamical description, the evolution in the fluid is described
by the Navier--Stokes equations, while a no slip condition is assumed at the
liquid--solid interface. It has been observed that such a no--slip condition
allows for droplet spreading only under the seemingly unphysical condition of
infinite dissipation of energy, e.g. \cite{HuhScriven-1971}. This apparent
\textit{no slip paradox} has led to many discussions about possible resolutions
of this paradox. We want to discuss some related issues. As one way to
avoid the no--slip paradox different relaxed slip conditions have been proposed
where the tangential velocity, relative to the solid, at the interface is a
function of the tangential shear stress. In particular, we consider the
evolution of a thin capillary driven droplet in the so--called lubrication
approximation regime. Assuming a no slip condition, in this regime the evolution
of the droplet can be described by the thin--film equation
\begin{align} \label{tfe-noslip} 
    h_t +(h^3 h_{xxx})_x \ = \ 0 \qquad\qquad %
  \text{in $\Ome \ := \ \{h>0\}$.}
\end{align}
Here, $h$ is the droplet height and the continuity assumption $h = 0$ is assumed
at $\p \Ome$.  Assuming more general slip conditions, the lubrication
approximation leads to the family of equations
\begin{align} \label{tfe-slip} 
  h_t +((h^3+ \eps^{3-n} h^{n})h_{xxx})_x \ = \ 0\qquad\qquad %
  \text{in $\Ome \ := \ \{h>0\}$,}
\end{align}
The mobility exponent $n \in (0,3)$ models the slip condition at the
liquid--solid interface: $n = 2$ represents Navier--slip where a linear
slip/stress relationship is assumed, while $n = 1$ corresponds to Greenspan's
slip condition \cite{Greenspan-1978}. The regularization parameter $\eps > 0$,
which is assumed to be small, is called \textit{slip length} (see Section
\ref{LubApp}). We recover the equation \eqref{tfe-noslip} in the limit
$\eps \to 0$. Equation \eqref{tfe-slip} has to be complemented by a boundary
condition at $\p \Ome$. We assume the constant angle condition
\begin{align} \label{tfe-bc} %
  h_x \ = \ \ang_\eps \qquad\qquad\qquad %
    \text{at $\p \Ome \ = \ \p\{h>0\}$.}
\end{align}
We note that the boundary condition \eqref{tfe-bc} for \eqref{tfe-noslip} leads
to an ill--posed problem \cite{EggersFontelos-2025}. Throughout the paper, for
both equations \eqref{tfe-noslip} and \eqref{tfe-slip} we also use the boundary
condition
\begin{align} \label{no-massflux} %
  h^n h_{xxx} \ = \ 0 \qquad\qquad \text{for $x \in \Ome$, $x \to \p \Ome$.}
\end{align}
This condition ensures that there is no loss of mass across the contact points
$\p \Ome$. For further references we refer e.g. to
\cite{BDQ-Book,HuhScriven-1971,Hocking-1983,EggersStone-2004}, for nonlinear
slip see e.g. \cite{ThompsonTroian-1997}. We note that other regularizations of
the no--slip paradox exist which are not considered here, see
e.g. \cite{OronDavisBankoff-1997,WitelskiBernoff-1999,OronBankoff-2001,BowenKing-2001}. For
existence of solutions for \eqref{tfe-noslip} we refer e.g. to
\cite{BertozziPugh-1994,BerettaBertschDalpasso-1995,BertschDalpassoGarckeGruen-1998,DalpassoGarckeGruen-1998,Gruen-2004-1,GiacomelliKnuepferOtto-2008,GnannWisse-2022,GnannWisse-2025}. The
literature is discussed more in detail in the next section.

\medskip

More in particular, we study solutions of \eqref{tfe-noslip} which appear as a
limit of solutions for \eqref{tfe-slip} as $\eps \to 0$. Our arguments are
formal in the sense that they are based on the use of matched asymptotic
expansions.  For simplicity, we consider solutions with an isolated contact
point $s(t)$ at the left boundary of the support, i.e
$\Ome \cap B_{\d}(s(t)) = [s(t), s(t) + \d)$ for some $\d >
0$. We obtain the following three types of solutions of \eqref{tfe-noslip} and
their asymptotic behaviour near the contact point:
\begin{itemize}
\item[(a)] \label{Typea} %
  The support is constant, $s(t)= s_{0}$. Furthermore, 
\begin{align} \label{S1E4} %
  h(x,t) \ \approx \ \gam(t) \left(x-s_{0}\right) \qquad\qquad %
  \text{ as $x \to  s_0^+$}. 
\end{align}%
for some smooth function $\gam(t)$ (see the paragraph on notation below
for the meaning of the symbol ``$\approx$''). We recall the well--known result
that the support is constant for sufficiently regular solutions (Theorem
\ref{thm-nomove}).
\end{itemize}
\begin{itemize}
\item[(b)] \label{Typeb} %
  The contact point $s(t)$ is an arbitrary, monotonically increasing,
  prescribed smooth function (this corresponds to a shrinking droplet). The
  solutions have asymptotic behaviour %
  \begin{align}
    h(x,t) \ %
    \approx \ (3 \dot s(t))^{\frac 13}\left(x-s(t) \right) \Big(\ln \frac{1}{
    x-s(t)} \Big)^{\frac 13} \qquad\qquad %
    \text{ as $x \to  s(t)^+$}.  \label{def-typeb}
  \end{align}
  (with notation $\dot s = \frac d{dt} s$) We remark that the possibility of
  having travelling wave solutions with receding support, as the type (b)
  solutions was already suggested in \cite{BoattoEtal-1993} and obtained by
  matched asymptotics in \cite{BowenKing-2001}. It turns out that the same
  initial data allows for infinitely many solutions.
\end{itemize}
\begin{itemize}
\item[(c)] \label{Typec} %
  The interface $s$ is also monotonically increasing and moves in a shorter
  time scale $\tau$ than $t,$ that will be denoted as $\tau $ and the motion as
  $s\left(\tau \right)$. The mass accumulates in the moving interface
  $s(\tau)$. Therefore (with the notation $s' := \frac{d}{d\tau}s$) %
\begin{align}
  h(x,\tau) =m(\tau) \delta_{s(\tau)}(x) +h_{0}(x) \chi_{\{ x>s\left(\tau
  \right)\} }  \ \  %
  \text{with} \ m'(\tau) = h_{0}(s(\tau)) s'(\tau)
\label{def-typec}
\end{align}
For these solutions the interface moves so fast that the profile $h_{0}$ does
not have time to change. In this case all the mass of $%
h_{0}$ that is reduced due to the reduction of the support accumulates at the
interface boundary. This requires a more sophisticated analysis, see the
upcoming paper \cite{paper2}.
\end{itemize}
The above solutions of \eqref{tfe-noslip} appear as limit of solutions of
\eqref{tfe-slip} and are related to the asymptotic behaviour of the contact
angle $\ang_\eps$ in the limit $\eps \to 0$.

\medskip

In fact, a contact angle $\ang_\eps \sim 1$ of order corresponds to type (a)
solutions. A contact angle of size $\ang _{\eps } \gg |\ln \eps|^{\frac 13}$
leads to type (c) solutions. A contact angle in the intermediate regime
$\ang _{\eps } \approx \gam |\ln \eps|^{\frac 13}$ leads to type (b)
solutions. The contact point velocity in the intermediate regime is given by
\begin{align} \label{dots-formula} %
  \dot s \ %
  = \   + \lim_{\eps \to 0} \frac {\ang_\eps^3}{3|\ln \eps|} \ %
  = \ + \frac \gam 3 \ > \ 0, \qquad %
  \text{where $\ang_\eps \approx \gam (-\ln \eps)^{\frac 13}$.}
\end{align} 
We believe that type (a) solutions also appear in the remaining case where
$1 \ll \ang_\eps \ll |\ln \eps|^{\frac 13}$ but this case is not treated in this
paper. We also note that the limits above might only exist for a subsequence
$\eps_k \to 0$ where the limiting angle has a well--defined behaviour. Assuming
that \eqref{tfe-slip} is a valid physical regularization of \eqref{tfe-noslip}
this implies that the three types of solutions (a), (b) and (c) are physically
feasible as well.
\begin{remark}[Time scales] \label{rem-time}\text{} %
  The evolution exhibits two time scales. In our (non--dimensionalized) model,
  the droplet profile $h$ changes in $\OO(1)$ time scales, i.e. times scales of
  order one. The motion of the contact point, however, generally lives on a
  different time scale which depends on the value of the microscopic contact
  angle.  This second time scale is associated with large times for type (a)
  solutions, $\OO(1)$ times scales for type (b) solutions and much shorter times
  for type (c) solutions.
\end{remark}
Equations \eqref{tfe-noslip} and \eqref{tfe-slip} describe an evolution which is
governed by the dissipation of capillary energy. The energy--dissipation formula
for \eqref{tfe-slip} has the form%
\begin{align} \label{diss-intro} %
  \frac d{dt} \frac{1}{2}\int_{\Ome }h_x^{2}  %
   +  \ang_{\eps }^{2} \ dx  \ %
  = \ -\int_{\Ome }(h^3+ \eps^{3-n} h^{n})h_{xxx}^{2}\ dx,
\end{align}%
assuming that the solutions are sufficiently regular. Here, the two terms on the
left--hand side describe the (relative) interfacial energies between the three
phases air, liquid and solid. The right--hand side describes the dissipation of
energy due to viscous friction.  For type (b) solutions of \eqref{tfe-noslip} we
obtain that the rate of dissipation of energy near the contact point is
divergent for type (b) solutions, i.e. for any $\d >0$ we have
\begin{align}  \label{S1E6} %
\int_{s(t) }^{s(t) + \d}h^3 h_{xxx}^2 \ dx  \ = \ \infty.
\end{align}%
At a first glance this would indicate that the type (b) solutions cannot have a
physical meaning and therefore must be discharged. However, the situation is
more complicated: We note that the contact angle $\ang_{\eps }$ is determined by
the classical Young's condition, and hence depends on the interfacial energies of
liquid--solid, liquid-gas and solid--gas interfaces. If $\ang_{\eps }$ diverges
logarithmically, then the energy related to liquid--solid interfaces is much
larger than the energy related to solid--gas interfaces (with a logarithmic
divergence). In the dissipation of energy formula \eqref{diss-intro} the
dissipation is finite for each $\eps >0,$ but it diverges logarithmically as
$\eps \to 0$ if $\ang_\eps \sim |\ln \eps|^{\frac 13}$. This dissipation
compensates for the increase of energy that results from the reduction of the
support of the droplet due to the reduction of the solid-liquid interface. Thus,
the divergent contributions of this term and the one due to the dissipation term
on the right--hand side cancel out in leading order as $\eps \to 0$. This
cancellation of the increase of energy due to the reduction of the support of
the droplet with the large dissipation will be described in detail in Subsection
\ref{sec-regime}. This cancellation explains why it is possible to have
physically admissible limiting solutions for which \eqref{S1E6} is divergent.

\medskip

Our formal arguments suggest that these three types of solutions are the only
possible solutions of \eqref{tfe-noslip} in the absence of fluxes across the
contact point. We give some formal arguments for the existence of these three
types of solutions for equation \eqref{tfe-noslip} and we will also give some
arguments how these solutions appear as a limit for solutions of
\eqref{tfe-slip}. A rigorous proof for the existence of type (a) solution will
be given in the upcoming paper \cite{GKV-preprint}. We also note that the
solutions of types (a), (b) and (c) are consistent with the lubrication
approximation.  This is not clear a priori in the case of the type (b) and (c)
solutions, because the angle between the liquid-gas interface and the
liquid--solid interface is large which would not be compatible with the
lubrication approximation, that requires to have nearly one-dimensional
flows. However, a more careful analysis shows that this is not the case for the
initial variables (cf. Remark \ref{rem-consist}).

\medskip


\textbf{Acknowledgement.} This project was initiated during a meeting at the
Lorentz Center in Leiden in June 2023, organized by L. Giacomelli, M. Gnann,
J. Hulshof, C. Lienstromberg and S. Sonner.  In addition to discussions with the organizers
there were many discussions with several participants such as M. Bertsch,
J. Eggers, G. Grün and J.R. King. The authors also gratefully acknowledge the
support by the Deutsche Forschungsgemeinschaft (DFG) through the collaborative
research centre ``The mathematics of emerging effects'' (CRC 1720 -539309657)
and Germany's Excellence Strategy EXC2047/1-390685813. The funders had no role
in study design, analysis, decision to publish, or preparation of the
manuscript.  HK was partially supported by the German Research Foundation (DFG)
by the project \#392124319 and under Germany's Excellence Strategy – EXC-2181/1
– 390900948.

\medskip

\textbf{Competing Interests.} The author declare to have no competing
interests.

\subsection{Discussion about results and previous literature}

Much work has been  devoted to the related family of equations
\begin{align}
  h_t +(h^n h_{xxx})_x\ = \ 0\qquad \text{in $\Ome := \{h>0\}$}  \label{tfe-n}
\end{align}%
for $n > 0$. In fact, for $n \in (0,3)$ solvability of this equation can be
expected to be the same as for \eqref{tfe-slip} since only the behaviour of the
leading term $h^n h_{xxx}$ as $h\to 0$ matters. For $n \in (0,3)$ two different
types of boundary conditions have been mostly considered: In the so--called
\textit{complete wetting regime} a zero contact angle condition is assumed; in
the partial wetting regime a non--zero contact angle is assumed. The contact
angle can be fixed and determined by Young's Law (as considered in this paper),
or variable.

  \medskip

  Existence of weak solutions of \eqref{tfe-n} in the complete wetting regime
  has been shown in
  \cite{BernisFriedman-1990,BertozziPugh-1994,BerettaBertschDalpasso-1995,Gruen-2004-1}
  for $n\in (0,3)$. Furthermore, the support of solutions with zero contact
  angle cannot shrink for $n\geq \frac{5}{2}$
  \cite[eq. (0.5)]{BerettaBertschDalpasso-1995}. For $n\geq \frac 32$, also the
  positivity set cannot shrink. Furthermore, for $n\geq 4$ the support of
  solutions remains constant. For existence of classical solutions we refer to
  e.g. \cite{GiacomelliKnuepferOtto-2008,GiacomelliKnuepfer-2010,GiacomelliGnannKnuepferOtto-2014,Gnann-2015,GnannPetrache-2018,GessGnann-2020}. Existence
  of weak solutions for stochastic thin--film equations has been considered
  e.g. in \cite{FischerGruen-2018,GessGnann-2020,DareiotisEtal-2021}. Relaxation
  to the stationary solution has been considered in
  e.g. \cite{CarrilloToscani-2002,Esselborn-2016}. Existence of weak solutions
  in the partial wetting regime has been established by Otto in \cite{Otto-1998}
  for $n=1$ and by Bertsch, Giacomelli and Karali in
  \cite{BertschGiacomelliKarali-2005} for $n\in (0,3)$. Finite speed of
  propagation is e.g.  shown in
  \cite{Bernis-1996-1,Bernis-1996-2,HulshofShishkov-1998,Gruen-2003}. For
  short--time, very initial data lead to a waiting time phenomenon, see e.g.
  \cite{DalpassoGiacomelliGruen-2001,BloweyKingLangdon-2007,Shishkov2007,Fischer-2014}. Existence
  of classical solutions with stationary support for $n = 3$ is work in
  preparation \cite{GKV-preprint}. At the contact point, these solutions have
  the behaviour $h(x,t)\approx b(t)(x-s(t))$, where $s(t)$ is the contact point.
  The long--time asymptotics of complete wetting solutions has been formally
  studied by Giacomelli, Gnann and Peschka in
  \cite{GiacomelliGnannPeschka-2023}.  Travelling wave solutions have been
  systematically investigated by Boatto, Kadanoff and Olla
  \cite{BoattoEtal-1993,KingTaranets-2013} and by King and Taranets in
  \cite{KingTaranets-2013}.

  \medskip

  The speed of the contact point for the model \eqref{tfe-slip} and its
    relation to the macroscopic or apparent contact angle has been denoted as
    Cox--Voinov Law, we refer to the works by Voinov, Tanner, Cox and Hocking
    \cite{Voinov-1977,Tanner-1979, Cox-1986, Hocking-1992}, see also
    \cite{BonnEggersEtal-2009}. In \cite{Eggers-2004}, Eggers has derived higher
    order corrections to this Law. Existence of travelling wave solutions which
    satisfy this law has been shown by Giacomelli, Gnann and Otto in
    \cite{GiacomelliGnannOtto-2016} in the zero contact angle case and by Gnann
    and Wisse \cite{GnannWisse-2022} in the case of non--zero contact
    angle. Delgadino and Mellet show that solutions of the thin--film equation
    equation in the regime of small slip converge to solutions of a quasistatic
    evolution problem \cite{DelgadinoMellet-2021}.

\medskip

The quantitative behaviour near the contact line can be best seen by the ODE 
\begin{align} \label{main-ODE} %
  ((h^3+ \eps^{3-n} h^{n})h_{\xi\xi\xi})_\xi  \ %
  = \ \pm 1\qquad \text{for $\xi \in  (0,\infty )$},
\end{align}%
which emerges e.g. from a travelling wave solution ansatz. The positive sign on
the right--hand side means a shrinking droplet, while the negative sign models
an expanding droplet. This equation for $n=3$ and $\eps =0$ has been explored
e.g. in \cite[eqs. (4.3.9),(4.3.23)]{BenderOrszag-Book}. We note that
\eqref{main-ODE} for values $n<3$ has been investigated in \cite[$r = 1$
corresponds to $m = 3$]{BoattoEtal-1993}. The equation for $n=3$ has also been
investigated in \cite{DuffyWilson-1997} in the context of Tanner's Law, relating
radius of the droplet with the time.

\medskip

Eggers and Fontelos \cite{EggersFontelos-2025} have recently considered the case of a pinned contact line
both analytically and numerically. They show that the
for $n \neq 3$, the contact angle changes like a power law, while the dynamics
become nonlocal in the critical case $n = 3$. With this result, the authors
conjecture that the contact line problem with prescribed contact angle becomes
ill--posed in the case $n = 3$.

\medskip

In addition to assuming a slip at the interface, other physical mechanisms have
been proposed as well to resolve the no--slip paradox, for reviews we refer
e.g. to \cite%
{BDQ-Book,OronDavisBankoff-1997,BonnEggersEtal-2009,SnoeijerAndreotti-2013,Shikhmurzaev-2020,DurastantiGiacomelli-2024}.
For the situation when there is evaporation of the liquid at the contact line,
we refer to \cite{RednikovColinet-2020}.  The results in that paper suggest the
existence of another class of solutions having non--trivial fluxes. Actually,
the presence of fluxes at the contact line would imply
$h\approx (x-s(t))^{\frac 34}$ near the contact point.  For computational
approaches to spreading of thin--films we refer e.g. to
\cite{BarrettBloweyGarcke-1998,MuenchWagnerWitelski-2025,Peschka-2015,Peschka-2018,PeschkaHeltai-2022}.

\medskip

\textbf{Notation.}  We will use the notation for asymptotic formulas, i.e. we
write $%
f(x) \approx g(x)$ as $x\to x_{0}\in [ -\infty ,\infty] $ to denote that
$\lim_{x\to x_{0}}\frac{%
  f(x) }{g(x) } =1$ and $f(x) \ll g(x)$ as $x\to x_{0}\in [ -\infty ,\infty] $
if $%
\lim_{x\to x_{0}}\frac{f(x) }{g(x)} =0$. We write $f(x) \sim g(x)$ if there are
constants $c,C > 0$ such that $c < \frac{f(x) }{g(x)} < C$ for $|x-x_0| \ll
1$. We will denote as $C_{x,t}^{k,m}$ the set of non-negative functions $h$ that
have $k$ derivatives in $x$ and $m$ derivatives in $t$ in the set
$\left\{ h>0\right\} $.  Notice that in this definition no assumptions are made
about the behaviour of the function or its derivatives as $(x,t)$ approaches
$\partial \left\{ h>0\right\}$. We repeatedly use the Taylor expansion
$(1-a)^{\frac 13} \approx 1 - \frac 13 a$ for $a \ll 1$

\section{Physical background \label{LubApp}}

\begin{figure}
  \centering
  \includegraphics[width=6cm]{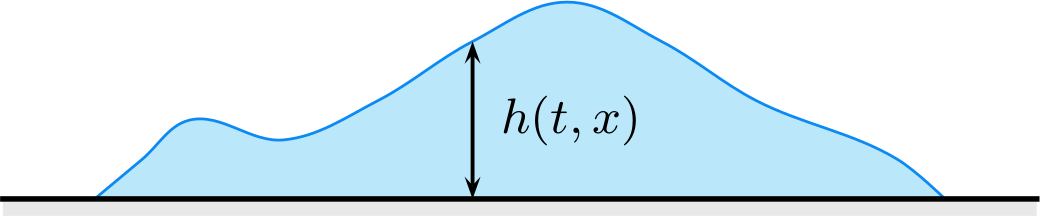}
  \caption{Thickness profile $h(t,x)$ of an evolving droplet.}
  \label{fig-droplet}
\end{figure}

\subsection{Lubrication approximation} \label{ss-lubapp} %

We recall the lubrication approximation that is used to derive the equation
\eqref{tfe-slip} for $n = 2$ taking as starting point the free boundary problem
for the Stokes or Navier-Stokes equations describing the evolution of a droplet
on a solid substrate (see Fig. \ref{fig-droplet}). Since in the derivation of
the thin film equation, the inertial terms give a negligible contribution, we
will consider only the free boundary problem for the Stokes problem. For
simplicity, we consider the two-dimensional case. Since this derivation is
standard we will just indicate the main steps, see also
\cite{GiacomelliOtto-2003,GuentherProkert-2008,MatiocProkert-2012,KnuepferMasmoudi-2013,KnuepferMasmoudi-2015}. We
assume that the geometry of the fluid domain is
\begin{align}
  \UU(T) \ := \ \big \{ X \in (S_1,S_2), \ 0 < Y < H(X,T)  \big \},
\end{align}
where $H$ is the droplet height, satisfying $\spt H = [S_1(T),S_2(T)]$ also
including the case $S_2(T) = \infty$. We write
$\p_1 \UU := \OL \UU \cap \{ Y > 0 \}$ for the liquid--gas interface and
$\p_0 \UU := \OL \UU \cap \{ Y = 0 \}$ for the solid--liquid interface. The
evolution is given by the free boundary problem driven by the Stokes flow
\begin{align+}
  \mu \Delta V \ %
  &= \nabla P, %
  &\nabla \cdot V %
  &= \ 0\ \ %
  &&\text{in\ \ }\UU(t)   \label{S2E1} \\
  V_1 \ %
  &= \ \bet^n |\p_Y V_1|^{n-2} \p_Y V_1, %
  &V_{2}  %
  &= \ 0  %
  &&\text{on $\p_0 \UU(t)$},   \label{S2E3} \\
  2 \mu \SS N \ %
  &= (P - \gam_{LG} \kap) N  %
  &&&&\text{on $\p_1 \UU(t)$},  \label{S2E5} \\
  V_N \ %
  &= \ V\cdot N 
  &&&&\text{on $\p_1 \UU(t)$},  \label{S2E5b} \\
  H_X \ %
  &= \ \tan \Theta \ %
  &&&&\text{at $\p_0 \UU \cap \p_1 \UU$}, \label{S2E6}
\end{align+}
where $\SS := \frac 12 (\nabla V + \nabla^t V)$. Here, $V(X,T)$ is the fluid
velocity, $P(X,T)$ is the pressure, $\kap$ is the curvature of the interface,
$N$ its normal and $V_N$ is the normal velocity of the interface. Furthermore,
$\mu$ is the viscosity, $\gam_{LG}$ the surface tension of the liquid-gas
interface and the (generalized) Navier slip length $\bet$. On the liquid--gas
interface, we assume that the viscous stress is balanced by the surface tension
and the interface moves with the fluid. At the liquid--solid interface, we
assume a Navier slip boundary condition and a non--penetration condition. The
slip length $\bet$ is assumed to be small and noting that in the limit
$\bet \to 0$ we obtain the classical no-slip boundary condition. The system of
equations \eqref{S2E1}--\eqref{quot} is complete and well--posedness has been
shown e.g. in \cite{GuoTice-2018,LeoniTice-2023,BGKMRS-2024b}, either
for static or corresponding dynamic contact angle conditions.

\medskip

We assume the contact angle $\Theta$ to be fixed and determined by Young's Law
\begin{align} \label{quot} %
  \cos \Theta \ %
  = \ \frac{\gam_{SG}-\gam_{SL}}{\gam_{LG}} \ \in \ (0,1), %
\end{align}
which ensures local minimization of interfacial energy at the contact
point. Here, $\gam_{SL}$, $\gam_{SG}$ and $\gam_{LG}$ are the surface tensions
between solid, liquid and gas, respectively. The angle $\Theta$ is also called
microscopic contact angle which might differ from the experimentally observed
macroscopic contact angle. Solutions dissipate the total interfacial energy,
\begin{align} \label{energy-init} %
  \EE[H] \ %
  = \ \gam_{LG} \int_{\p_0 \UU} \sqrt{1+H_{X}^2} -\cos \Theta \ dX.
\end{align}
In this paper, we mostly consider the partial wetting regime where the right
hand side of \eqref{quot} is in the interval $(0,1)$ and hence
$\Theta \in (0,\pi)$.

We sketch the formal derivation of the thin film equation \eqref{tfe-slip} using
the lubrication approximation starting from \eqref {S2E1}--\eqref{S2E6}. We
first introduce a typical vertical length scale $\LAM_y$ and a typical
horizontal length scale $\LAM_x$: We choose $\LAM_y$ as the maximal droplet
thickness, assuming that this number is finite. Furthermore, we define $\LAM_x$
as a typical length scale where the change of the height from the contact point
is of the same order as $\LAM_y$.

\medskip

We make the following assumptions:
\begin{enumerate}
\item Horizontal length scales are much larger than vertical length scales. In
  terms of the non--dimensional variable this implies
  \begin{align}
  \delta \ := \ %
  \frac {\LAM_y}{\LAM_x} \ \ll  \ 1, \qquad\qquad %
  \end{align}
  We assume that a corresponding relation also holds for the derivatives of the
  profile, e.g. $|H_{XX}| \lesssim \LAM_y/\LAM_x^2$.
\item The evolution is driven by a balance of capillary and viscous forces. The
  dominance of viscous forces has been already used by the choice of Stokes flow
  for the initial model
\item We also assume that the maximal thickness of the droplet is much larger
  than the slip length, i.e. we consider the regime
  \begin{align}
  \eps \ := \ \frac{\bet}{\LAM_y}.
  \end{align}
  This assumption is, strictly speaking, not necessary for the validity of the
  lubrication approximation but is valid in the regime $\eps \ll 1$ which we
  consider in the following. We note that all parameters in the following can be
  thought as a function of $\eps$, i.e. $\d = \d_\eps$.
\end{enumerate}
We also choose  typical length scales $\LAM_p$ and $\LAM_t$ for pressure in time by
\begin{align} \label{res-2} %
  \LAM_p \ %
  := \ \frac{\gam_{LG} \LAM_y}{\LAM_x} \ %
  = \ \gam_{LG} \delta, \quad\qquad  %
  \LAM_t \ %
  := \ \frac{\gam_{LG} \LAM_y}{\LAM_x \mu}  \ %
  = \ \frac{\gam_{LG} \delta}{\mu}.
\end{align}
We introduce the rescaled (non--dimensional) variables
\begin{align} \label{res-var} %
  \big( x, y, h, t, v_1, v_2, p, \ang, \Ome \big) \  %
  := \ \Big( \frac{X}{\LAM_x}, \frac{Y}{\LAM_y}, \frac{H}{\LAM_y}, \frac{T}{\LAM_t}, \ %
   \frac{V_1 \LAM_t}{\LAM_x}, \frac{V_2 \LAM_t}{\LAM_y}, \ \frac{P}{\LAM_p}, \frac{\Theta }{\delta }, \frac{\p_0 \UU}{\LAM_x} \Big). %
\end{align}
By our choice of variables, solutions of the limit equation change in a time
scale of order $1$, see Remark \ref{rem-time}. A Taylor expansion of the stress
tensor and of the normal and tangent vectors in the new variables yields
 \begin{align}
   \SS \ %
   &\approx \ \d^2 %
   \begin{pmatrix}
     \d^2 v_{1,x} & \frac 12 v_{1,y} \vspace{0.5ex}\\
     \frac 1{2} v_{1,y} & \d^2 v_{2,y}
   \end{pmatrix}, \qquad
     N \ \approx \ \VEC{- \d h_x \\ 1}, \qquad
   N^\perp \approx \VEC{1 \\  \d h_x}.
 \end{align}
 For the boundary conditions at the upper boundary we then get
 \begin{align}
   \d^2 (N^\perp \cdot \SS N) \ %
   &= \  - \d h_x v_{1,x} + \frac 12 v_{1,y} (1 + \d^2 h_x^2) + \d h_x v_{2,y}, \\
   \d^2 (N \cdot \SS N) \ %
   &= \   \d^4 h_x^2 v_{1,x}^2 + \d h_x  v_{1,y}  + \d^2 v_{2,y}.
 \end{align}
 From the boundary condition \eqref{S2E5} and using \eqref{res-2} we hence get
 $v_{1,y} = 0$ and $p = h_{xx}$. The second momentum equation yields $p_y = 0$,
 i.e.  $p = h_{xxx}$. The first momentum equation then reads in highest order
 $v_{1,yy} = p_x = h_{xxx}$. Together with the boundary conditions
 $v_1 = \eps v_{1,y}$ for $y = 0$ and $v_{1,y} = 0$ for $y = h$ we obtain the
 velocity profile
 \begin{align}
   \mu v_1 \ = \ h_{xxx} \Big (\frac 12 y^2 - h y - \eps h \Big) \qquad \text{for $y \in (0,h)$. }
 \end{align}
 The equation for conservation of mass can be written in the form
 $\p_t h + v_1 h_x = v_2$. Integrating in vertival direction, we then obtain the
 thin--film equation
 \begin{align}
   h_t \ %
   = \ - \Big (\int_0^h  v_1 \ dy \Big)_x \ %
   = \ - \Big (\big(\frac 16 h^3 + \eps h^2 \big)  h_{xxx} \Big)_x. \ %
 \end{align}
 The factor $\frac 16$ can be removed by a further simple rescaling in $h$ and time.

 \medskip

 It might seem inconsistent that we allow for large contact angles in the
 lubrication approximation. However, this contradiction is resolved by the
 difference between physical and rescaled variables as indicated in Section
 \ref{LubApp}.
 \begin{remark}[Applicability and rescaled contact angle] \label{rem-consist} %
   \text{} %
   For the lubrication approximation to be a good approximation, the droplet
   profile has to be flat and in particular the contact angle has to be small in
   the initial variables.  Due to the anisotropic rescaling, the droplet is,
   however, not necessarily flat in the rescaled variables and in particular the
   rescaled contact angle $\ang$ is not necessarily small. In fact, we consider
   the limit $\eps \to 0$ where the angle $\ang_\eps$ is a second free parameter
   which can take different values in the limit. The smallness of $\delta$ then
   ensures the flatness of the liquid droplet.
\end{remark}
Since the contact angle is determined by Young's Law we can also phrase the
regimes (a),(b) and (c) from the introduction in terms of the interfacial
energies.
\begin{remark}[Lubrication approximation and interfacial energies]
  For the validity of the lubrication approximation we need
  $\frac 12 \Theta^2 \approx 1 - \cos \Theta$. In view of Young's Law
  \eqref{quot} this is the same as
  \begin{align}
      \frac 1{\gam_{LG}} (\gam_{LG} + \gam_{SL} - \gam_{SG})  \ %
      \approx  \ \frac {\Theta^2}{2} \ %
      = \ \frac 12 \ang^2 \d^2 \ %
      \to \ 0^+ \ \qquad %
      \text{as\ $\eps \to 0$}  \label{S4E2}
  \end{align}
  Type (b) solutions are then e.g. obtained in the regime when
  \begin{align}
      \frac 1{\gam_{LG}} (\gam_{LG} + \gam_{SL} - \gam_{SG})  \ %
    \approx \ \gam \d^2 |\ln \eps|^{\frac{2}{3}} \qquad\qquad %
   \text{as\ $\eps \to 0$}  \label{S4E3}    
  \end{align}
  In particular, the interfacial energies $\gam_{SG}$, $\gam_{SL}$, $\gam_{LG}$
  need to be adjusted in a very careful way to obtain the dynamics of the
  interface by means of \eqref{tfe-slip}. In particular, type (b) solutions
  appear only if condition \eqref{S4E3} holds.
\end{remark}

\subsection{Dissipation of energy  \label{sec-regime}}

Solutions of the initial Stokes system dissipate the total interfacial energy
\eqref{energy-init}. With a small angle expansion of the energy
\eqref{energy-init} and in terms of the rescaled variables \eqref{res-var} we get
\begin{align}
  \frac{\EE[H]}{\gam_{LG} \LAM_x \d^2} \  %
  \lupref{energy-init} = \ \frac 1{\LAM_x\d^2} \int_{\p_0 \UU} \sqrt{%
  1+H_{X}^{2}} -\cos \Theta \ dX  \ %
  \approx  \
  \FF[h],
\end{align}
where the thin--film energy is given by
  \begin{align}
    \FF[h] \ := \ \frac 12  \int_{\Ome} h_x^2+ \ang^2 \ dx.
  \end{align}
  The thin film equation \eqref{tfe-slip} satisfies an energy--dissipation
  identity with respect to quantity on the right--hand side above even for weak
  solutions.  A definition for weak solutions in the partial wetting regime has
  been given by Bertsch, Giacomelli and Shishkov in
  \cite[Def. 3.1]{GiacomelliShishkov-2005}. Here, we recall this identity for
  classical solutions with sufficient regularity. In fact, let $n\in (0,3)$,
  $\eps > 0$ and let $h$ be a sufficiently regular solution of
  \eqref{tfe-slip}--\eqref{tfe-bc}.  Then
  \begin{align}
      \ddt \FF[h] \ = \ - \int_{\Ome} (h^3+ \eps^{3-n} h^{n}) h_{xxx}^2 \ dx. \label{T5E4}
  \end{align}
  To see these we may assume without loss of generality we assume $\Ome=(s_{-},s_{+})$. Using
  \eqref{tfe-bc} we get
\begin{align}
  \frac d{dt} \frac 12 \int_{\Ome} h_x^2 + \ang_\eps^2  \ dx   \ %
  = \ \int_{\Ome}h_{tx}h_x=-\int_{\Ome}((h^3+ \eps^{3-n} h^{n}) h_{xxx})_{xx}h_x \ dx.
\end{align}
Integrating by parts two more times yields \eqref{T5E4}. The boundary terms for
the integration by parts for the last identity vanish by assumption on the
regularity of the solution. We now discuss the dissipation of energy for type
(a) solutions.
\begin{remark}[Dissipation of energy for type (a) solutions] %
  Suppose that $\Ome (t)$ is a bounded interval. In this case, formal analysis
  Tanner (see Section \ref{ss-TannerCV} below) indicates that
  \begin{itemize}
  \item in the first stage where $t = \OO(1)$, $|\Ome (t) |$ remains nearly
    constant, and most of the dissipation takes place through the term
    $\int_{\Ome (t)}h^3h_{xxx}^2dx$ via reduction of the energy
    $\int_{\Ome (t)}h_x^2dx$.
      \item During the second stage where $t = \OO(|\ln \eps|$), $h_{xxx}$ is
        approximately zero in the bulk of the droplet, and most of the dissipation
        takes place at $\OO(\eps)$--distance from $\p \Ome (t)$. This dissipation rate
        is of order $|\ln \eps|^{-1}$ and is associated to $\OO(1)$ changes of
        $|\Ome (t)|$ in times of order $|\ln \eps|$. This can be seen with the change
        of variables $\eps y = x-s$, $h= \eps H$ (see \eqref{S7E4}).
      \end{itemize}
    \end{remark}
    Type (b) solutions are obtained in the regime
    $\ang_\eps \approx \gam |\ln \eps|^{\frac 13} \ll 1$. For type (b) solutions
    the relaxes energy by reducing the size of the solid--liquid interface. The
    energy gained in this way is infinitely (logarithmically) large and it is
    compensated with the infinite (logarithmic) dissipation in the fluid.
  \begin{lemma}[Energy dissipation for type (b) solutions]
    Let $h$ be a solution of \eqref{tfe-noslip} with support $(s,\infty)$ for
    some function $s = s(t)$. We assume that $h$ is a type (b) in the sense that
    for $\xi :=x- s$ we have
    \begin{align}
      h(\xi,t) \ %
      \approx \ (3 \dot s)^{\frac 13} \xi |\ln \xi|^{\frac 13} \qquad
      \text{for $\xi \ll 1$.}
    \end{align}
    We also assume that the time derivative $h_t$ and up to three spatial
    derivatives of $h$ have the asymptotic behaviour, obtained by applying these
    derivatives on the right--hand side above.  Furthermore, the solution and its
    derivatives decay sufficiently fast for $\xi\gg 1$. Then the limit below
    exists and is finite and
    \begin{align}
      \frac d{dt} \frac 12   \int_{s}^\infty h_x^2\ dx    \ %
      = \ \lim_{\d \to 0} \Big( \frac{\dot s}2h_x^2(\d )-\int_{s + \d}^{\infty}h^3h_{xxx}^2 \ dx \Big).
    \end{align}
\end{lemma}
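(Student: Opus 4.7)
The plan is to carry out the standard energy identity on the truncated domain $(s(t)+\delta,\infty)$, where all quantities are smooth, and then pass to the limit $\delta\to 0^+$, keeping careful track of the boundary contributions at $\xi=\delta$. Setting $J(t,\delta) := \tfrac12 \int_{s(t)+\delta}^\infty h_x^2\, dx$, I will apply the Leibniz rule, integrate by parts twice, and use the equation $h_t = -(h^3 h_{xxx})_x$ together with the assumed decay at infinity to obtain
\begin{align*}
  \tfrac{d}{dt} J(t,\delta) \
  = \ -\tfrac{\dot s}{2} h_x^2(\delta) \ -\ h_x h_t\big|_{\xi = \delta} \ -\ h^3 h_{xx} h_{xxx}\big|_{\xi = \delta} \ -\ \int_{s+\delta}^\infty h^3 h_{xxx}^2\, dx.
\end{align*}
The crucial rewriting is the moving-frame decomposition $h_t = -\dot s\, h_x + \tilde h_t$, where $\tilde h_t(\xi,t)$ denotes the $t$-derivative at fixed $\xi = x-s(t)$. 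This turns the two $h_x^2$-boundary terms into $-\tfrac{\dot s}{2}h_x^2 + \dot s\, h_x^2 = +\tfrac{\dot s}{2}h_x^2$, which is the sign appearing in the statement.

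Next, I will plug in the prescribed asymptotics and their formal derivatives into the residual boundary terms. Writing $\gam := (3\dot s)^{1/3}$ and $L := |\ln \xi|$, the assumed expansion yields $h_x \sim \gam L^{1/3}$, $h_{xx} \sim -(\gam/3\xi) L^{-2/3}$, $h_{xxx} \sim (\gam/3\xi^2) L^{-2/3}$, and $\tilde h_t \sim \dot\gam\, \xi L^{1/3}$. Hence $h_x \tilde h_t = O(\xi L^{2/3}) \to 0$ and $h^3 h_{xx}h_{xxx} = O(L^{-1/3}) \to 0$ as $\delta\to 0$, so I deduce
\begin{align*}
  \lim_{\delta\to 0}\tfrac{d}{dt} J(t,\delta) \
  = \ \lim_{\delta\to 0}\Big(\tfrac{\dot s}{2} h_x^2(\delta) - \int_{s+\delta}^\infty h^3 h_{xxx}^2\, dx\Big).
\end{align*}
Once this identity is in place, existence of the limit on the right follows automatically from existence on the left, even though neither individual piece converges: a direct asymptotic computation confirms that both $\tfrac{\dot s}{2}h_x^2(\delta)$ and $\int_{s+\delta}^\infty h^3 h_{xxx}^2\,dx$ behave like $\tfrac{\gam^5}{6}L^{2/3}$, and these divergences cancel to leading order.

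Finally, I will swap the $\delta$-limit with the outer time derivative. Since $h_x^2\sim \gam^2 L^{2/3}$ is locally integrable in $\xi$, $J(t,\delta)\to \tfrac12\int_s^\infty h_x^2\,dx$ pointwise in $t$. Integrating $\tfrac{d}{dt} J(\cdot,\delta)$ over a compact time interval $[t_0,t_1]$ and invoking dominated convergence (the integrand being dominated uniformly in $\delta$ through the asymptotic envelope) identifies this limit with $\tfrac{d}{dt}\tfrac12 \int_s^\infty h_x^2\, dx$. The main obstacle is the delicate cancellation between the divergent boundary term and the divergent dissipation integral: both diverge logarithmically in $\delta$ with identical leading coefficient $\tfrac{\gam^5}{6}|\ln\delta|^{2/3}$, and it is precisely the form of the contact-line asymptotics — most notably the exponent $\tfrac13$ in the factor $|\ln\xi|^{1/3}$ in \eqref{def-typeb} — that enforces this exact cancellation. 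Verifying it and controlling the $o(1)$ remainders through the differentiated asymptotic expansion is the core technical step.
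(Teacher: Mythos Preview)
Your proposal is correct and follows essentially the same approach as the paper: truncate at $\xi=\delta$, integrate by parts twice, use the equation, and check that the boundary terms $h_x\tilde h_t$ and $h^3 h_{xx}h_{xxx}$ are $o(1)$ via the differentiated asymptotics, leaving precisely $\tfrac{\dot s}{2}h_x^2(\delta)-\int_{s+\delta}^\infty h^3 h_{xxx}^2$, whose two divergent pieces both behave like $\tfrac{1}{6}(3\dot s)^{5/3}|\ln\delta|^{2/3}$ and cancel. The only cosmetic difference is that the paper works from the outset in the moving-frame variable $\xi$ (so the domain $(\delta,\infty)$ is fixed and the convective term $\dot s\,h_\xi$ appears directly in the equation), whereas you stay in the lab frame, pick up the Leibniz boundary term $-\tfrac{\dot s}{2}h_x^2$, and then recover the sign via the decomposition $h_t=-\dot s\,h_x+\tilde h_t$; these are the same computation.
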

\begin{proof}
  By assumption, for $\xi \ll 1$ we have
  $h_{\xi\xi\xi} \approx \frac 13 (3 \dot s)^{\frac 13} \xi^{-2} |\ln \xi|^{-\frac 23}$ and
  \begin{align}  %
    &h_\xi h_t  \ %
    = \ \OO(\xi |\ln \xi |^{\frac23}) \ = \ o(1), \label{asy1} \\ \qquad %
    &h^3h_{\xi\xi}h_{\xi\xi\xi} \ %
    = \ \OO(|\ln \xi |^{-\frac 13}) \ = \ o(1). \label{asy3} %
  \end{align}
  where $o(1) \to 0$ as $\xi \to 0$.  For $\xi > 0$ we have
  $h_t = h_\xi\dot s - (h^3 h_{\xi\xi\xi})_\xi$. Using this identity, for
  $\d > 0$ we hence obtain
  \begin{align}
    \ddt \frac 12 \int_{\d}^{\infty } h_\xi ^2\ dx  \ %
    &\stackrel{p.i.}= \ -\int_{\d}^{\infty }h_{\xi\xi}h_t + \left[ h_{\xi
      }h_t \right]_{\d}^{\infty } \notag \\
    &\upref{asy1}= \ -\dot s\int_{\d}^{\infty } h_{\xi\xi} h_\xi  \ d\xi %
      + \int_{\d}^{\infty }h_{\xi\xi}(h^3h_{\xi\xi \xi})_\xi +  o(1) \notag \\
    &= \ \frac{\dot s}2h_\xi ^2(\d )-\int_{\d}^{\infty} h^3h_{\xi\xi \xi}^2 \ d\xi %
      -\left[ h^3h_{\xi\xi}h_{\xi\xi \xi}  \right]_{\d}^{\infty }+ o(1) \notag \\
    &\upref{asy3}= \ \frac{\dot s}2h_\xi ^2(\d) - \int_{\d}^{\infty}h^3h_{\xi\xi \xi}^2 \ d\xi  + o(1),  \notag
  \end{align}%
  where $o(1) \to 0$ as $\d \to 0$. For $\d \to 0$ we obtain the asserted
  identity. Furthermore, we note that we have  the integral identity
  \begin{align}
    \int_\d^{\frac 12} \frac 1{\xi |\ln \xi|^{\frac 13}} \ %
    \approx \ \frac 32 |\ln \d|^{\frac 23}  \qquad\qquad \text{for $\d \ll 1$}
  \end{align}
  In particular, for  $\xi \to 0$ a straightforward calculation yields
  \begin{align}
    \frac{\dot s}2h_\xi ^2(\d) - \int_{\d}^{\infty}h^3h_{\xi\xi \xi}^2 \ d\xi \ %
    \approx \ \frac 16  (3\dot s)^{\frac 53} |\ln \d|^{\frac 23}  -  \frac 16 (3 \dot s)^{\frac 53} |\ln \d|^{\frac 23} + \OO(1).
  \end{align}
  i.e. the cancellation of the leading order terms shows that the limit in
  leading order exists and is finite.
\end{proof}
Generally, the interfacial energies between liquid and solid might depend on the
horizontal variable. The contact angle then also depends on the position.  In
this case, for the rescaled model and for type (b) solutions we would get a
contact angle of the form
$\ang_{\eps}(x) = \psi (x) |\ln \eps|^{\frac 13}\to \infty$ for some function
$\psi = \psi (x)$,$\,$i.e. $\psi (x)$ would modify the angle locally. The energy
then has the form
\begin{align}
  E(h) \ %
  = \ \int_{s(t)}^{\infty }h_x^2 \ dx +|\ln \eps|^{\frac23} \int_{s(t)}^{\infty }\psi (x)\ dx.
\end{align}

\section{Solutions in the case of no slip} \label{sec-noslip} %

We now discuss some particular problem classes for equation
\eqref{tfe-noslip}. We will consider two types of problems, namely, solutions
with shrinking support and solutions with fixed support.

\subsection{Sufficient conditions for non--spreading of solutions}

We consider sufficient conditions for the non--spreading of solutions. Since we
do not consider the asymptotic behaviour $\eps \to 0$ here, we can equivalently
discuss the issue for the equation \eqref{tfe-n} keeping only the dominant term
near the boundary. It is not possible to have smooth solutions with a finite
contact angle if the interface moves. In this section, we consider local
solutions in the domain
\begin{align}
  D \  =  \ \left\{ (x,t) :s(t) <x<s(t) + \d ,\ 0<t<T\right\} 
\end{align}
for some $\d > 0$ and a single contact point $x = s(t)$, i.e. we consider
\begin{align} \label{sol-local} %
  \begin{aligned}
    &h_t + (h^n h_{xxx})_x \ = \ 0 \qquad &&\text{in $D$}, \\
    &h \ = \ 0 \qquad &&\text{for $x = s(t)$}.
  \end{aligned}
\end{align}
The no--slip paradox is the conjecture that the no--slip condition at the
liquid--solid interface prevents movement of the contact line since this would
lead to infinite dissipation of energy. In the perspective of the thin--film
equation we can phrase it in the following way:
\begin{conjecture}[Finite dissipation implies fixed support] %
  Let $n \geq 3$, $\d >0,\ T>0$. Let $h\in C^0(\OL D)$ and $s\in C^0((0,T))$ be
  a classical and positive in $D$ solution of \eqref{sol-local} with %
  \begin{align} \label{S1E8} %
    \iint_{D} h^n  h_{xxx}^{2} \ dxdt \ < \ \infty.  
  \end{align}
  Then $s(t) = s_0$ is constant in $(0,T)$.
\end{conjecture}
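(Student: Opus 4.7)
The plan is to argue by contradiction, using the fact that a moving contact line would force $h$ to approach, near the contact point, a travelling-wave profile whose dissipation density is not integrable for $n \geq 3$.

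Assume for contradiction that $s$ is not constant on $(0,T)$. By continuity there is a subinterval $(t_0,t_1)\subset(0,T)$ on which $s$ is strictly monotone; after the reflection $x\mapsto -x$ we may assume $s$ is strictly increasing there (shrinking support), and pick $t^*\in (t_0,t_1)$ at which $s$ is differentiable with $v:=\dot s(t^*)>0$. In the moving-frame variable $\xi=x-s(t)$, equation \eqref{sol-local} reads $H_t-\dot s H_\xi+(H^n H_{\xi\xi\xi})_\xi=0$. Freezing $\dot s=v$ and seeking a stationary profile $\Phi=\Phi(\xi)$, one integration in $\xi$ together with $\Phi(0)=0$ and the no-mass-flux condition $\Phi^n\Phi'''\to 0$ as $\xi\to 0^+$ yields
\begin{align}
\Phi'''\ =\ v\,\Phi^{1-n}.
\end{align}
A leading-order ansatz $\Phi(\xi)\sim c\xi^\alpha$ forces the exponent balance $\alpha-3=(1-n)\alpha$, hence $\alpha=3/n$ with a prefactor determined by $v$ (with a logarithmic correction at the critical value $n=3$, where $3/n=1$ makes the third-derivative term vanish at leading order and one instead finds $\Phi\sim(3v)^{1/3}\xi(\log 1/\xi)^{1/3}$, exactly the asymptotics of type (b) solutions). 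The local dissipation density along this profile is
\begin{align}
\Phi^n(\Phi''')^2\ \sim\ c'\,\xi^{(n+2)\alpha-6}\ =\ c'\,\xi^{(6-3n)/n},
\end{align}
which fails to be integrable on any interval $(0,\eta)$ precisely when $n\geq 3$ (with a logarithmic divergence at $n=3$). This divergence is the mathematical embodiment of the no-slip paradox and is the obstruction we exploit.

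To make this heuristic rigorous, I would perform a blow-up at $(s(t^*),t^*)$ using the natural scaling invariance of \eqref{sol-local}: set
\begin{align}
h_\lambda(\xi,\tau)\ :=\ \lambda^{-3/n}\,h\bigl(s(t^*)+\lambda\xi,\ t^*+\lambda\tau\bigr),
\end{align}
and send $\lambda\to 0$. Under suitable compactness, the limit $\Phi_\infty$ is a traveling wave of speed $v$ satisfying the ODE above, and by lower semi-continuity the local dissipation $\int_0^\eta \Phi_\infty^n(\Phi_\infty''')^2\,d\xi$ is controlled by the local dissipation of $h$ in a neighborhood of $(s(t^*),t^*)$; finite total dissipation then forces $\int_0^\eta \Phi_\infty^n(\Phi_\infty''')^2\,d\xi<\infty$, contradicting the divergence just computed.

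The principal obstacle is the compactness step: passing to the blow-up limit requires uniform estimates stronger than the $C^0(\overline D)$ hypothesis, in particular uniform bounds on $h_\xi$ and equicontinuity of $h$ in $t$ near the contact line, presumably to be derived from a weighted $H^1$-type energy estimate obtained by testing \eqref{sol-local} against suitable cut-offs. An alternative route avoiding blow-up would be to integrate \eqref{sol-local} over a thin strip $\{s(t)<x<s(t)+\eta\}$, use mass conservation to express the flux at $x=s(t)+\eta$ in terms of $\dot s$ and the density profile, and apply Cauchy--Schwarz to produce an $\eta$-dependent lower bound on $\iint h^n h_{xxx}^2$ that diverges as $\eta\to 0$ whenever $\dot s\not\equiv 0$ and $n\geq 3$. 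The borderline case $n=3$ demands particular care because the divergence is only logarithmic, reminiscent of the cancellation exhibited in the energy identity for type (b) solutions in the lemma above; this is precisely why the case $n=3$ is the critical one and why the conjecture is not obviously amenable to the Bernis--Friedman entropy technique that settles $n\geq 4$.
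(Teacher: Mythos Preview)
The statement you are attempting to prove is labeled a \emph{conjecture} in the paper, and the paper does not supply a proof. Immediately after stating it, the authors instead prove a weaker result (Theorem~\ref{thm-nomove}) under much stronger hypotheses --- $h\in C^{4,1}_{x,t}(\overline D)$, $s\in C^1$, and a prescribed linear expansion $h=\gamma(t)(x-s(t))+o(x-s(t))$ at the contact point --- and that proof proceeds by a direct ODE integration (integrate $(h^n h_{xxx})_x=-h_t$ once, divide by $h^n$, integrate twice more, and observe $|h_x|\to\infty$ unless $\dot s=0$). The paper explicitly remarks that this argument ``does not directly use finite dissipation of energy.'' So there is no proof in the paper to compare your proposal against.

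As for your proposal on its own terms: it is a strategy, not a proof, and you correctly identify the gap yourself. Under the bare hypotheses of the conjecture --- $h\in C^0(\overline D)$, $s\in C^0((0,T))$, finite dissipation --- there is no mechanism in sight to obtain the uniform bounds on $h_\xi$ or the equicontinuity in $t$ needed for the blow-up sequence $h_\lambda$ to converge to anything, let alone to a nontrivial travelling wave; and even if a limit existed, lower semicontinuity of the dissipation under such weak convergence is not automatic. Your alternative ``integrate over a thin strip and Cauchy--Schwarz'' route faces the same obstacle: with only $C^0$ control on $h$ you cannot quantify the flux $h^n h_{xxx}$ at $\xi=\eta$ in terms of $\dot s$. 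The heuristic computation of the dissipation density along the travelling-wave profile is correct and is indeed the formal reason one expects the conjecture to hold, but turning it into a proof under these minimal assumptions is precisely the open problem.
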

We have assumed continuity of the height profile. Alternatively, one could also
assume finiteness of the thin--film energy.  The next theorem was discussed
during a workshop in the Lorentz Center \cite{Lorentz-Center}. In particular,
J. Eggers asked if under general smoothness conditions one can prove that the
support of the solution does not expand. For the convenience of the reader we
present a short and simple proof: 
\begin{theorem}[Regularity implies fixed support] \label{thm-nomove} %
  Let $n \geq 3$, $\d >0,\ T>0$ and let $h\in C_{x,t}^{4,1}(\OL D)$ with
  $s\in C^{1}((0,T))$ be a solution of \eqref{sol-local} such that
  \eqref{no-massflux} holds. Furthermore, suppose that for some positive
  $\gam\in C^{1}((0,T))$ and $R_1, R_2 \in C_{x,t}^{0,1}(D)$ we have
  \begin{align} \label{exp-regder}
    h(x,t) \ %
    &= \ \gam (x-s(t) ) + R_{1}(x,t) \qquad \text{where $\DS \lim_{x\to s(t)^+}\frac{R_{1}(x,t) }{x-s(t)} \ = \ 0$}, \\
    h_t (x,t) \
    &=-%
    \gam \dot s(t) +R_{2}(x,t)\ \qquad\qquad %
    \text{where  $\lim_{x\to s(t)^+}R_{2}(x,t) \ =\ 0$.} \label{exp-timeder} %
  \end{align}
  Then $s(t) = s_0$ is constant in $(0,T)$.
\end{theorem}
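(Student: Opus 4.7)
I would argue by contradiction. Suppose $\dot s(t_0) \neq 0$ for some $t_0 \in (0,T)$; after possibly reversing the sign, assume $\dot s(t_0) > 0$. Fix this $t_0$ and write $s = s(t_0)$, $\gamma = \gamma(t_0)$ for brevity. The strategy is to derive a sharp asymptotic for $h_{xxx}$ as $x \to s^+$, integrate it twice to conclude that $h_x \to +\infty$, and then close the contradiction against the linear expansion \eqref{exp-regder}.

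The first step is to integrate the PDE \eqref{sol-local} in $x$ over $(s, s+\eta)$: the right endpoint contributes $h^n h_{xxx}(s+\eta, t_0)$, while the left endpoint vanishes by \eqref{no-massflux}, giving
\begin{align*}
h^n h_{xxx}(s+\eta, t_0) \ = \ -\int_s^{s+\eta} h_t(x, t_0) \, dx.
\end{align*}
Substituting \eqref{exp-timeder} and using $R_2 \to 0$ as $x \to s^+$, the right--hand side evaluates to $\gamma \dot s \, \eta + o(\eta)$. Combining this with $h(s+\eta, t_0) = \gamma \eta + o(\eta)$ from \eqref{exp-regder}, so that $h^n = \gamma^n \eta^n(1 + o(1))$, I obtain the two--sided asymptotic
\begin{align*}
h_{xxx}(s+\eta, t_0) \ = \ \frac{\dot s}{\gamma^{n-1}} \, \eta^{1-n}(1 + o(1)) \qquad \text{as $\eta \to 0^+$}.
\end{align*}

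Because $\dot s > 0$, this yields a one--sided lower bound $h_{xxx}(s+\eta, t_0) \geq c \, \eta^{1-n}$ for some $c > 0$ and all sufficiently small $\eta$. Integrating from $\eta$ up to a fixed interior point $\eta_0$, the exponent $1 - n \leq -2$ makes $\int_\eta^{\eta_0} \xi^{1-n} d\xi \to +\infty$, so $h_{xx}(s+\eta, t_0) \to -\infty$ with $h_{xx}(s+\eta, t_0) \leq -c' \, \eta^{2-n}$. Integrating once more, $\int_\eta^{\eta_0} \xi^{2-n} d\xi$ again diverges (power--like for $n > 3$, logarithmically for $n = 3$), forcing $h_x(s+\eta, t_0) \to +\infty$ as $\eta \to 0^+$.

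To close the contradiction, I use $h(s, t_0) = 0$ to write $h(s+\eta, t_0) = \int_0^\eta h_x(s+\xi, t_0)\, d\xi$ for every $\eta > 0$. The blow--up of $h_x$ implies that for each $M > 0$ there is $\eta_M > 0$ with $h_x > M$ on $(s, s+\eta_M)$, so $h(s+\eta, t_0)/\eta \geq M$ for all $\eta < \eta_M$. Hence $h(s+\eta, t_0)/\eta \to +\infty$, contradicting \eqref{exp-regder}, which forces this ratio to tend to the finite value $\gamma$. The technical heart of the argument is the asymptotic for $h_{xxx}$: the $o(1)$ remainder must be controlled well enough that the sign of $h_{xxx}$ is preserved and the quantitative lower bound survives two integrations down to $\eta = 0$. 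The borderline case $n = 3$ is tightest, giving only a logarithmic blow--up of $h_x$, but the comparison with the linear expansion is still decisive.
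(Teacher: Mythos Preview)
Your proposal is correct and follows essentially the same route as the paper: integrate the PDE once using the no--massflux condition to obtain the asymptotic $h_{xxx} \approx \dot s\,\gamma^{1-n}(x-s)^{1-n}$, then integrate twice more to force $|h_x|\to\infty$ when $n\ge 3$, contradicting the linear expansion \eqref{exp-regder}. The only cosmetic differences are that the paper treats both signs of $\dot s$ at once via $|h_x|\to\infty$ rather than fixing a sign, and leaves the final contradiction (which you spell out via $h(s+\eta)/\eta$) implicit.
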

\begin{proof}
  The expansion \eqref{exp-timeder}  implies that for any $t \in (0,T)$ we have     \text{as $x\to s(t)^+$}
  \begin{align}
    (h^n h_{xxx})_x \ %
    = \ - h_t \ 
    \stackrel{\eqref{exp-timeder}}= \ \gam \dot s(t) - R_{2}(x,t).  %
  \end{align}
  Integrating once from $s(t)$ to $x$ and using the assumptions  we obtain
  \begin{align}
    h_{xxx}(x,t) \ %
    &\stackrel{\eqref{no-massflux}}= \  \frac 1{h^n} \Big(\gam \dot s(t) (x-s(t)) + \int_{s(t)}^x R_2(\t x,t) \ d \t x \Big) \stackrel{\eqref{exp-regder}}\approx \  \frac{\dot s(t)}{\gam^{n-1} (x-s(t))^{n-1}}. \qquad\qquad %
  \end{align}
  Integrating two times and if $n \geq 3$ it follows that
  $|h_x(x,t)| \to \infty$ as $x\to s(t)^+$ if $\dot s(t) \neq
  0$. This implies  $\dot s = 0$.
\end{proof}
We note that the above proof does not directly use finite dissipation of
energy.

\subsection{Type (a) solutions -- fixed  support}

In this section, we justify why solutions of \eqref{tfe-noslip} with fixed
support $\supp h = [0,\infty)$ can be expected to exist. We consider the
problem%
\begin{align}  h_t +(h^3h_{xxx})_x\ %
  &= \ 0\ \ \qquad\qquad\text{for }x\in (0,\infty ), \  t>0,  \label{S4E5} \\
  h(0,t) \ %
  &= \ 0\ \ \qquad\qquad\text{for }t\geq 0  \label{S4E6}
\end{align}
with initial data $h_0$. One can construct solutions to \eqref{S4E5}--\eqref{S4E6} with linear behaviour
near the contact point. More precisely, we assume that
\begin{align}   \label{S4E9} %
  h(x,t) \ %
  = \ \gam x + \phi (x,t),
\end{align}
where $\gam$ is a smooth positive function which represents the leading order
behaviour near the contact point. A rigorous construction for solutions of
\eqref{S4E5}--\eqref{S4E6} is work in progress \cite{GKV-preprint}. Here, we
just give formal arguments to show the feasibility of such solutions.

\medskip

Plugging \eqref{S4E9} into \eqref{S4E5} and keeping the leading order terms we
obtain%
\begin{align}
  \phi_t + \gam^3(x^3\phi_{xxx})_x + \dot{\gam} x \ %
  \approx \ 0.
\end{align}
Using that $\gam >0$ we can change the time scale as
$\tau = \int_{0}^{t}(\gam(s) )^3ds$. Then%
\begin{align}
  \phi_{\tau }+(x^3\phi_{xxx})_x + \frac{\dot{\gam} x}{\gam^3} \ = \ 0 \qquad\qquad %
  \text{for $x>0$,} \label{S5E1}
\end{align}%
where $\dot \gam = \ddt \gam$. The leading order behaviour for $x \ll 1$ is
obtained by%
\begin{align}
  \phi_{\tau} + (x^3\phi_{xxx})_x \ = \ 0 \ \qquad %
  \text{for $x>0$.}
\label{S5E3}
\end{align}
Equation \eqref{S5E3} is a fourth order parabolic equation degenerated at
$x=0$. Let $\tilde{\phi}$ denote the Laplace transform of $\phi$ in time with
Laplace variable $z \in \C$. Then \eqref{S5E3} turns into the ODE %
\begin{align} \label{this-ODE} %
  z \tilde{\phi}+(x^3\tilde{\phi}_{xxx})_x \ %
  = \ 0\qquad\qquad %
  \text{for $x>0$, for all $z \in \C$}.
\end{align}
The possible asymptotic behaviours as $x\to 0^+$ for solutions of
\eqref{this-ODE} are $a_1 \ln x$, $a_2$, $a_3 x$, $a_4 x^{2}$ for suitable
functions $a_i = a_i(z)$. Since we consider solutions for which the contact
point stays at $x=0$ we must impose $a_1(z) =a_{2}(z) =0$.  Therefore, the
solution of \eqref{S5E3} behaves to the leading order as $%
\phi \approx A_1(\tau ) x$ as $%
x\to 0^+$. On the other hand, the problem in which we are interested is
\eqref{S5E1}. We can then choose $\gam$ such that $\phi$ in \eqref{S4E9}
satisfies $\phi =O(x^{2})$ as $x\to 0^+$. Then $h \approx \gam x$ as
$x \to 0^+$.

\subsection{Type (b) solutions -- prescribed shrinking support \label{IncreasingS}}

We now discuss a different class of solutions of \eqref{tfe-noslip}, supported
in the half-line $(s(t) ,\infty)$ for $t \geq 0$. The remarkable feature is that
the position of the contact point $s(t)$ can be chosen arbitrarily, as long as
it is sufficiently smooth and strictly increasing. We assume
$\supp h = [s(t),\infty)$ i.e. there is only a single contact point. In terms of
$\xi = x - s(t)$ \eqref{tfe-noslip} takes the form
  \begin{align} \label{tfe-noslip-xi} %
    h_t  - \dot s h_\xi +(h^3 h_{\xi\xi\xi})_\xi \ &= \ 0  \qquad 
    \text{for $\xi > 0$}, \\
    h(0,t) \ &= \ 0
  \end{align}
  with initial data $h(\cdot,0) = h_0$. We write by some abuse of notation $h$
  as a function of $\xi$ and $t$. We look for solutions of \eqref{tfe-noslip-xi}
  with the asymptotic behaviour
\begin{align}   \label{ass-type-b} %
  h(\xi,t) \ \approx \ \gam \xi  (- \ln \xi)^{\frac 13} \qquad\qquad %
  \text{as } \xi \ll 1.
\end{align}
Plugging the asymptotics \eqref{ass-type-b} into \eqref{tfe-noslip-xi} we obtain
the relation $3 \dot s \ = \ \gam^3$ i.e. we get the
asymptotics \label{def-typeb} of the type (b) solutions. The non--negativity of
the solution together with \eqref{ass-type-b} implies the consistency condition
$\dot s >0$. Solutions with the asymptotics \eqref{ass-type-b} have been
investigated for travelling solutions e.g. in
\cite{BoattoEtal-1993,BowenKing-2001}. We remark that these solutions do not
have a finite contact angle.

\medskip

Linearizing formally around the asymptotic behaviour \eqref{ass-type-b} we
obtain a linear equation near the contact point which is solvable in the class
of functions with linear behaviour including a logarithmic correction. We write
\begin{align}   \label{H-bar-def} %
  h(\xi,t) \ %
  =: \ \OL{h} ( \xi,t) +u( \xi,t), \qquad\qquad %
  \text{where} \quad %
  \OL{h}(\xi,t) \ %
  := \ \gamma \xi ( - \ln  \xi) ^{\frac 13}%
\end{align}%
assuming that $u(\xi,\cdot) \ll \OL{h}(\xi,\cdot)$ for $\xi \ll 1$ with
similar estimates for spatial derivatives. In leading order we obtain the
approximated equation as $\xi\to 0^+$
\begin{align}
  \OL{h}_t +u_t + \big[ ( \OL{h}^{3}\OL{h}_{\xi\xi\xi})_\xi -\dot s \OL{h}_{\xi}\big] +( \OL{h}^{3}u_{\xi\xi\xi}+3\OL{h}^{2}%
  \OL{h}_{\xi\xi\xi}u-\dot s   u)_\xi \ %
  = \ 0.  \label{u-nonhomog}
\end{align}
The leading order behavior of $u$ as $\xi\to 0^+$ can be computed arguing
similarly as in the case of fixed, prescribed support (Subsection 3.2). We need
to examine the contribution of the homogeneous terms, namely%
\begin{align}
u_t + \big( \OL{h}^{3}u_{\xi\xi\xi}+3\OL{h}^{2}\OL{h}_{\xi\xi\xi}u-\dot s  u\big)_\xi \ %
= \ 0.  \label{u-homog}
\end{align}
A major difference with the case of constant support is the fact that the term
$\dot s u$ yields a contribution comparable to the term
$\OL{h}^{3}u_{\xi\xi\xi}$. The leading order terms for $\xi\ll 1$ in
\eqref{u-homog} are%
\begin{align}
  u_t + \gamma \big( \xi^{3} (- \ln \xi) u_{\xi\xi\xi}+ \frac{2}{3}u\big)_\xi \ %
  = \ 0 \qquad %
  \text{for $\xi \ll 1$}.
\label{u-homog-app}
\end{align}
The four possible asymptotic behaviors for $\xi \ll 1$ for solutions of
\eqref{u-homog-app} are %
$a_1 (- \ln \xi) ^{\frac 13}$, $a_2$, $a_3 \xi (- \ln \xi)^{-\frac{2}{3}}$ and
$a_4 \xi^{2}(- \ln \xi)^{\frac 13}$ for suitable functions $a_i = a_i(t)$. Since
\eqref{u-nonhomog} is a fourth order equation, we can expect to determine
uniquely the solutions prescribing two boundary conditions at $\xi = 0$. With
our assumptions we get $a_1 = a_2 = 0$. Therefore, the first contribution due to
the homogeneous part of \eqref{u-nonhomog} has the behaviour
$a_3 \xi (- \ln \xi)^{-\frac{2}{3}}$ which satisfies the consistency condition
$u(\xi,\cdot) \ll \OL{h}(\xi,\cdot)$ for $\xi \ll 1$. We need to estimate also
the contribution of the non-homogeneous terms in \eqref{u-nonhomog} for
$\xi \ll 1$. The term $%
[( \OL{h}^{3}\OL{h}_{\xi\xi\xi})_\xi -\dot s (t) \OL{h}_{\xi}]$ yields a term of
form $a(t) \xi(- \ln \xi)^{-\frac{2}{3}}\ln (- \ln \xi)) $, while the term
$\OL{h}_t $ yields a much smaller contribution, which can be estimated as
$\OO(\xi^{2}(- \ln \xi) ^{\beta }) $ for $\xi \ll 1$.

\medskip

We remark that the solvability of \eqref{tfe-noslip} in a domain
$\{ (x,t) :t>0,\ x>s(t)\} $ with the behaviour \eqref{ass-type-b} can be thought
of as a boundary value problem, with unusual boundary conditions that are due to
the degeneracy of \eqref{tfe-noslip} near the contact line $x=s(t)$. Notice that
the function $s(t)$ is arbitrary. Therefore, the support of $h$ is basically
arbitrary, as long as it is decreasing in time. These solutions are the
solutions that have been termed as type (b) solutions in the
introduction. Solvability of this problem will be addressed in future work.

\section{Solutions in the case of  weak slippage} \label{sec-slip} %

We consider the equation \eqref{tfe-slip} with weak slippage. In the first
subsection we discuss well--posedness for the equation. We then discuss how the
solutions of \eqref{tfe-slip} converge to solutions of type (a),(b),(c) of the
limit equation \eqref{tfe-noslip}. For simplicity we assume
$\supp h = [s(t),\infty)$ and we use the variables $\xi = x - s(t)$. Then
\eqref{tfe-slip} takes the form
  \begin{align} \label{tfe-slip-xi} %
    h_t  - \dot s h_\xi +(( h^3 + \eps^{3-n} h^n) h_{\xi\xi\xi})_\xi \ &= \ 0  \qquad 
    \text{for $\xi > 0$}, \\
    h(0,t) \ = \ 0, \quad h_\xi(0,t)  \ &= \ \ang
  \end{align}
  with initial data $h(\cdot,0) = h_0$. By an abuse of notation we write 
  $h$ as a function of $\xi$ and $t$.

\subsection{Well--posedness and Boundary conditions}

We consider well--posedness and boundary conditions for the problem
\eqref{tfe-slip-xi}. Since we do not consider the asymptotic behaviour
$\eps \to 0$ here, we can equivalently discuss the issue for the equation
\eqref{tfe-n}

\paragraph{Solutions with prescribed contact line $s(t)$.} one can construct some solutions with prescribed position of the contact
points. For simplicity we consider the case when $\spt h = (s(t,\infty))$. The
difference is that in the case $m<3$ one can prescribe $s(t)$ as any arbitrary
smooth function, not necessarily increasing. The behaviour of the solution
$h(x,t)$ near the contact point $x=s(t)$ with $\xi := x - s(t)$ is given by
\begin{align}
  h(x,t) \ = \ \gam \xi  + \phi (\xi,t), \qquad %
\end{align}%
for some smooth function $\gam >0$ and some function $\phi$ with
\begin{align} \label{ass-phi} %
  \phi(\xi,t) = o(\xi ), \ \  %
  \phi_\xi(\xi,t) = o(1), \ \  %
  \phi_t(\xi,t) = o(\xi) \qquad %
  \text{as $\xi\to 0^+$}. 
\end{align}
We examine the behaviour of the linearized problem satisfied by $\phi $ near
the contact point. Keeping the leading order terms we obtain%
\begin{align}
  -\gam \dot s  + \phi_t -\dot s  \phi_\xi + \gam^n (\xi^n \phi_{\xi\xi\xi})_\xi  \ %
  =  \ 0 \qquad\qquad %
  \text{for $\xi >0, t>0$},
\end{align}%
with  $\phi (0,t) =0$. By assumption \eqref{no-massflux} we
have $\xi^n \phi_{\xi\xi \xi}\to 0$ as $\xi\to 0^+$. Integrating the equation in
$\xi$ and using \eqref{ass-phi} we obtain the asymptotic behaviour
\begin{align}   \label{S5E6} %
  \phi(\xi,t) \ \approx \
  \left \{
  \begin{array}{ll}
      \DS \frac{\gam^{1-n}\dot s }{(4-n)(3-n)(2-n) }\xi^{4-n}\ \ \ \qquad %
    &\text{if $2 < n < 3$,} \vspace{0.5ex}\\
    \DS \tfrac 12 \gam^{-1}\dot s  \xi^2 \ln \xi\ \ \ \qquad %
    &\text{if $n = 2$,} \vspace{0.5ex} \\
    \DS \tfrac 12 \phi_{\xi\xi}(0) \xi^2.
    &\text{if $0 < n < 2$}.
  \end{array}
  \right.
\end{align}%
Notice that there are no constraints about the sign of $\dot s$. This is
different to the case of solutions for \eqref{tfe-noslip} where the position of
the contact point can be prescribed arbitrarily as long as the support is
shrinking (cf.  Section \ref{IncreasingS}).

\paragraph{Solutions with prescribed contact line angle.} One can construct the
solution of a different type of boundary problem for \eqref{tfe-n} with
$n<3$. More precisely, we can prescribe the value of the contact angle
$\gam \geq 0$. To impose this quantity is a very stringent condition and this
determines uniquely the value of $s(t)$. Existence of such solutions has been
proved in the complete wetting as well as partial wetting case, see
e.g. \cite{BertozziPugh-1994,BerettaBertschDalpasso-1995,BertschDalpassoGarckeGruen-1998,DalpassoGarckeGruen-1998,Gruen-2004-1,GiacomelliKnuepferOtto-2008,GnannWisse-2022}. It
seems possible to use an approach similar to the one in those papers to solve
the problem with prescribed, smooth $\gam$ by reformulating the angle condition
as an integral equation for $s(t)$.
\begin{remark}[Boundary conditions] %
  As discussed before, for our boundary value problem, we can either prescribe
  two boundary conditions for $h$ or we can prescribe the contact point velocity
  and one boundary condition. We show how a similar issue arises from the
  classical one--dimensional Stefan problem
    \begin{align} \label{stefan}
      &u_t  \ = u_{xx}\ \ 
      &&x>s(t),  \ t \geq 0, \\
      &u  \ = \  1, \quad  \dot s \ = \ - u_x, 
      &&x = s(t),  t \geq 0
    \end{align}
    with initial conditions $u = u_0$. In terms of $U = \int_x^\infty u$ for
    $x > s(t)$ we obtain
    \begin{align} \label{stefan-2} 
      &U_t  \ = U_{xx}\ \ 
      &&x>s(t),  \ t \geq 0, \\
      &U  \ = \  \int_{s_0}^\infty u_0 \ dx, \quad  U_x \ = \ - 1,  
      &&x = s(t),  t \geq 0. \label{U-bc}
    \end{align}
    Indeed, for the first boundary condition in \eqref{U-bc} we calculate
    \begin{align}
      \ddt U(s(t), t) \ 
      &= \ U_x(s(t),t) \dot s  + \p_t U (s(t),t) \ 
        = \ - \dot s  + \int_{s(t)}^\infty u_{xx} (\xi,t) \ d\xi \\ 
      &= \ - \dot s  - u_{x} (\dot s,t) \ = \ 0. 
    \end{align}
    This illustrates that the Stefan problem \eqref{stefan} -- one boundary
    condition for $u$ and the speed $\dot s$ is prescribed --- is equivalent to the
boundary value problem \eqref{stefan-2} where two boundary conditions on the
function are assumed at the free boundary while the speed of the contact line is
only implicitly given.
\end{remark}

\subsection{Approximating type (a) solutions -- Cox-Voinov Law} \label{ss-TannerCV}

In this subsection we discuss the type (a) solutions and their relation to the
Cox-Voinov Law which connects the speed of the contact point with the
  apparent contact angle.  We will see that for $\eps \to 0$, type (a)
solutions of \eqref{tfe-noslip} arise as limit of solutions of
\eqref{tfe-slip-xi} where the constant angle $\ang_\eps$ does not depend on
$\eps$. Both in the case of complete or partial wetting, the Cox--Voinov Law can
be reformulated as a law for the speed of the contact point. Our main goal is to
illustrate the fact that those computations yield a limit law for the speed of
the contact lines associated to the equation \eqref{tfe-noslip} obtained as a
limit of \eqref{tfe-slip}. In this regime the contact line moves very slowly
with speed of order $|\ln \eps|^{-1}$.

\medskip

We consider solutions of \eqref{tfe-slip-xi} with a single contact point.  Far
from the contact point, the solutions approximately solve \eqref{tfe-noslip} and satisfy
\begin{align} \label{S5E8} %
  h(\xi,t) \ %
  \lupref{S1E4}\approx \ \gam \xi, \qquad\qquad \text{for $\eps \ll \xi \ll 1$},
\end{align}%
where $\gam >0$ is an $\OO(1)$ function that depends on
initial data. The asymptotic formula \eqref{S5E8} breaks down near the contact line when
$h^3 \sim \eps^{3-n}h^{n}$, i.e.  $\xi \sim \eps$. To investigate the solutions in the inner
region we use the new variables%
\begin{align} \label{inner-var-a} %
  h \ = \ \eps H,\ \quad \ \xi \ = \ \eps y,\ \quad  t \ = \ \eps \tau  
\end{align}
In terms of these new variables \eqref{tfe-slip-xi} takes the form
\begin{align}
  H_{\tau }-\dot s H_y+ \left(\left(H^3+H^{n}\right)
  H_{yyy}\right)_y \ %
  = \ 0 \qquad\qquad
  \text{for $y>0$}. 
\end{align}
with $H_y(0,\cdot) = \ang$.  We expect the solutions to become stationary if
$\tau $ is large. With the assumption \eqref{no-massflux} we obtain the
approximative equation
\begin{align}   \label{S5E9} %
  -\dot s + (H^2+H^{n-1}) H_{yyy} \ = \ 0 \qquad\qquad
  \text{for $y>0$}.
\end{align}
We need to match solutions of \eqref{S5E9} with the asymptotic behaviour
\eqref{S5E8} for $\eps \ll \xi\ll 1$ where $\dot s$ is one of the quantities
that must be determined. The equation \eqref{S5E9} must be solved with contact
angle $\ang = 0$ in the complete wetting case and contact angle $\ang > 0$ in
the partial wetting case. This analysis to the Cox--Voinox Law in the case of
complete wetting as well partial wetting. We use the self--consistent assumption
$|\dot s| \ll 1$ for $\eps \ll 1$.

\paragraph{Partial wetting case.}

We now consider solutions of \eqref{S5E9} with nonzero contact angle $\ang > 0$
assuming that the slope is almost constant for distances to the contact point of
order $\eps$. We assume $n \in (1,3)$ to be fixed and consider the asymptotics $\eps \to 0$.  We make the ansatz
$H = H_0+H_1 + \ldots$ with $|H_1| \ll H_0$ for $y \to 0^+$. We obtain to
leading order%
\begin{align}
  H  \ %
  \approx \ H_0 \ %
  = \ \ang y \qquad\qquad \text{as $y\to 0^+$}.
\end{align}
Furthermore, $H_1$ solves to leading order
\begin{align}
  H_{1,yyy} \ %
  = \ \frac{\dot s}{H_0^2+H_0^{n-1}} \ %
  = \ \frac{\dot s}{\ang^2 y^2  + \ang^{n-1} y^{n-1}} \qquad\qquad %
  \text{for $y>0$.} 
\end{align}
With the assumption $|H_1| \ll y$ for $y \ll 1$ and $H_1 \ll y^2$ for $y \gg 1$
this yields%
\begin{align} \label{H1-form} %
  H_1 \ %
  = \ |\dot s| \int_0^y \int_0^{y_1} \int_{y_2}^{\infty }\frac{1}{\ang^2 y_3^2 +
  \ang^{n-1} y_3^{n-1}} \ dy_3 dy_2 dy_1.
\end{align}
Integrating \eqref{H1-form} we get the asymptotic behaviour 
\begin{align}
  H_1 \ %
  \approx \ \frac{|\dot s|}{\ang^2}y\ln y \qquad\qquad %
  \text{as $y\to \infty$.}
\end{align}
Using the approximation $H = H_0 + H_1$ and in terms of the variables $h$,
$\xi$, cf. \eqref{inner-var-a} and as in \eqref{S7E1} we then obtain%
\begin{align} \label{Tan-outer-match} %
  h \ = \ \eps H \ %
  \approx \ \ang y+ \frac{|\dot s|}{\ang^2}y\ln y \ %
  \approx \ \ang\xi + \frac{|\dot s|}{\ang^2} \big(\ln \frac 1{\eps}\big) \xi  \qquad\qquad %
  \text{for $\eps \ll \xi \ll 1$}
\end{align}
Matching \eqref{Tan-outer-match} with \eqref{S5E8} we obtain the Cox--Voinov Law
\cite{Voinov-1977,Cox-1986,Hocking-1992} %
\begin{align} \label{S7E4} %
  \dot s \ %
  = \ \frac{\ang^2}{\ln \frac 1\eps} (\ang-\gam),
\end{align}
which is valid with the assumption $n \in (1,3)$ in the limit $\eps \to
0$. Notice that the speed of the contact line vanishes if the outer contact
angle $\gam$ is the same as the microscopic contact angle $\ang$. The contact
line speed logarithmically converges to zero if $\eps \to 0$. Hence, in real
situations the change of the droplet profile $h$ and the motion of the contact
line $s(t)$ could take place in similar time scales. We note that the above
computation has some analogies with the computation in Subsection
\ref{MatchTypeb}.

\medskip

\paragraph{Complete wetting case.} Although most of the paper is concerned with
the case of a non--zero contact angle, for the convenience of the reader we also
give a short motivation of Tanner's Law which is relevant for the zero contact
angle case. We focus on the relevant case $n=2$, associated to the Navier slip
condition. We need to obtain a solution of \eqref{S5E9} with a suitable choice
of $\dot s$ and the boundary condition $H_y\left(0,\tau \right) =0$ such that
$h = \eps H$ satisfies the matching condition \eqref{S5E8}. We look for
solutions of \eqref{S5E9} in the form $H= H_{0}+H_1 + \ldots$ where
$|H_1| \ll H_{0}$ for $y \ll 1$.  The leading order contribution is determined
by $H_{0}H_{0,yyy} = \dot s$ (with $\dot s = \ddt s$) which yields
\begin{align}
H_0(y,\tau)  \  \approx \ \sqrt{\tfrac 83} (-\dot s)^{\frac 12}y^{\frac 32} \qquad\qquad %
  \text{as $y\to 0^+$}  \label{S6E3}
\end{align}
using a standard asymptotic argument.  The nonnegativity of the solution and
\eqref{S6E3} then show that $\dot s <0$, i.e. the support of the solutions can
only expand. In leading order \eqref{S5E9} then takes the form
$H_0^2 H_{0,yyy} + H_0 H_{1,yyy} + H_1 H_{0,yyy}$ = $0$ which leads to
\begin{align} \label{H1-nonhom} %
  \frac 83 y^3H_{1,yyy}-H_1 \ = \ \frac 83 (-\dot s) y^3.
\end{align}
The solution of \eqref{H1-nonhom} has the form $H_1 = H_{1,p} + H_{1,\hom }$
with the particular solution $H_{1,p} = \frac 8{45}(-\dot s) y^3$ and a solution
$H_{1,\hom}$ of the homogeneous problem. In particular, $H_{1,\hom}$ is a linear
combination of the functions $y^{\bet_i}$, $1 = 1,2,3$ with
$\bet_{1,2} = \frac{5}{4} \pm \frac 1{4}\sqrt{13}$ and $\bet_3 = \frac 12$. The
assumption $|H_1| \ll H_0$ implies $\bet_2 = \bet_3 = 0$. This yields for some
$K\in \R$ which might depend on $\dot s$ the asymptotic formula
\begin{align}
  H \ %
  \approx \ \sqrt{\frac{8}3} (-\dot s)^{\frac 12}y^{\frac 32}+ \frac 8{45} (-\dot s) y^3+Ky^{\bet_1}
  \qquad  %
  \text{for $(-\dot s)^{\frac 13} y \ll 1$.}
  \label{S6E4}
\end{align}
The asymptotic formula \eqref{S6E4} holds as long as $|H_1| \ll H_0$, i.e.
$y \ll (-\dot s)^{-\frac 13}$. Here, we make the self--consistent assumption
that $Ky^{\bet_1}$ yields a similar contribution than the other two terms for
these values of $y$. In terms of $\eta =(-\dot s)^{\frac 13}y$ this yields the
matching condition
\begin{align} \label{S6E6} %
  H \ \approx \ \sqrt{\tfrac 83} \, \eta^{\frac 32}+ \frac{8}{45}\eta^3+K(-\dot
  s)^{-\frac{\bet_1}3}\eta^{\bet_1} \qquad\qquad %
  \text{for $\eta \ll 1$.}
\end{align}
On the other hand, in terms of $\eta$, \eqref{S5E9} turns into %
\begin{align}
  1+ (H^2+H) H_{\eta \eta \eta } \ %
  = \ 0 \qquad\qquad %
  \text{for $\eta >0$},  \label{S6E5}
\end{align}%
Equation \eqref{S6E5} has been rigorously studied in
\cite{GiacomelliOtto-2002,GnannWisse-2022}. The behaviour of the solutions
of \eqref{S6E5} depends strongly on $K$: Except for a discrete set of values we
have either $H \approx y^2$ as $\eta \to \infty$ or $H =0$ for some positive
value of $\eta$. On the other hand, with the choice
$K=K_{0}\left(-\dot s \right)^{\frac{\bet_1}3}$ the solution satisfies
\begin{align}
  H \ %
  \approx  \ 3^{\frac 13}\eta (\ln \eta)^{\frac 13}\ \qquad\qquad %
  \text{as $\eta \to \infty$}.   \label{S6E7}
\end{align}
In terms of the initial variable $h$, cf. \eqref{inner-var-a}, equation \eqref{S6E7}
takes the form
\begin{align} \label{S7E1} %
  h \ %
  \approx \ 3^{\frac 13} (-\dot s)^{\frac 13}\xi \Big(\ln \frac {(-\dot
  s)^{\frac 13}\xi}{\eps} \Big)^{\frac 13} \ %
  \approx \ 3^{\frac 13} (-\dot s)^{\frac 13}\xi \big(\ln \frac 1\eps
  \big)^{\frac 13} \qquad %
  \text{for $\xi \sim 1$}
\end{align}
where we used the assumption $|\ln (-\dot s)^{\frac 13}\xi| \ll \ln \frac 1\eps$
for the last approximation which is consistent with \eqref{S7E2}.  Matching the
behaviour \eqref{S7E1} in the slip region with the asymptotics \eqref{S5E8} we
obtain
\begin{align}
  \dot s \ %
  = \ - \frac{\gam^3}{3 \ln \frac 1{\eps}} %
  \qquad\qquad \text{as $\eps \to 0$}.  \label{S7E2}
\end{align}
A similar argument can be worked out in the more general case $n < 3$ noting
that a logarithm appears on certain critical values of $n$.

\subsection{Approximating type (b) solutions \label{MatchTypeb}}

In this subsection we consider the limit $\eps \to 0$ for solutions of
\eqref{tfe-slip} in the case of type (b) solutions. We consider the situation
equation \eqref{tfe-slip-xi} of a single contact point where the solution has
support $(s(t), \infty)$. We assume that $n \in (0,3)$ is fixed.  At the contact
point we assume the microscopic contact angle
$\ang_\eps \approx \gam (-\ln \eps)^{\frac 13}$.  Furthermore, the solution of
\eqref{tfe-slip} approximately has the behaviour \eqref{ass-type-b} of a type
(b) solutions of \eqref{tfe-noslip} in the outer region for some $\OO(1)$
function $\gam$ in the outer region where $h^3\gg \eps^{3-n}h^{n}$
holds. 

\medskip

The transition between inner and outer regions occurs at the location where
$h^3 \approx \eps^{3-n}h^{n}$. Inserting the expansion \eqref{ass-type-b}, the
size $\Delta_\eps$ of the inner region is determined by
$\Delta_\eps^3 (- \ln \Delta_\eps) \ %
\approx \ \eps^{3-n}\Delta_\eps^n (-\ln \Delta_\eps)^{\frac{n}{3}}$,
i.e. $\Delta_\eps \ \approx \ \eps (-\ln \eps)^{-\frac 13}$ as $\eps \to 0$
using that $\ln \Delta_\eps \approx \ln \eps$. We introduce new
variables by
  \begin{align}     \label{inner-var-b} %
    \xi \ =: \ \eps (-\ln \eps)^{-\frac 13}y, \quad %
    t \ =: \ t_{0}+ \eps (-\ln \eps)^{-\frac 43}\tau,\quad %
    h \ =: \ \eps H,\  \ %
  \end{align}
  where $t_0 > 0$ is a fixed time. With the assumption
  $\ang_\eps \approx \gam (-\ln \eps)^{\frac 13}$, $H$ solves
\begin{align}
  &H_{\tau }-(-\ln \eps)^{-1}%
  \dot sH_{y}+((H^3+H^{n})H_{yyy})_{y} \ = \ 0, \qquad %
  &&\text{for $y>0$},  \label{T4E2} \\
  &H \ = \ 0,\ \ H_{y} \ = \ \gam \ \ \ %
    &&\text{for $y=0$,}  \label{T4E3-2}
\end{align}
where as always $\dot s = \frac d{dt}s$. Since small changes in $t$ correspond
to huge changes in the new time variable $\tau$ we approximate 
\eqref{T4E2} by the steady state equation
\begin{align}
  &-(-\ln \eps)^{-1}\dot sH_{y}+((H^3+H^{n})H_{yyy})_{y} =0, \qquad %
    &&\text{for $y>0$.}  \label{T4E4}
\end{align}
Integrating \eqref{T4E4}  and dividing by $H$ we obtain%
\begin{align}
  -(-\ln \eps)^{-1}\dot s+(H^2+H^{n-1})H_{yyy} \ \lupref{no-massflux}= \ 0,%
  &&\text{for $y>0$}
  \label{T5E3}
\end{align}
To the leading order in $\eps \to 0$, $H$ hence satisfies
\begin{align}
  &(H^2+H^{n-1})H_{yyy} \ = \  0,\ \ &&\text{for $y > 0$},\ \\
  &H=0,\ \ H_{y} = \gam \ \ \ &&\text{for $y=0$.} \label{eq-H2} %
\end{align}
In view of \eqref{ass-type-b} for $\Delta_\eps \ll \xi \ll 1$, i.e.
$1 \ll y \ll \eps (-\ln \eps)^{-\frac 13}$, we obtain the matching condition
\begin{align} \label{T4E6} %
  H \ %
  \lupref{inner-var-b}= \ \frac h\eps \ %
  &\lupref{ass-type-b}\approx \ \frac{\gam \xi}\eps (- \ln \xi)^{\frac 13} \ %
    \lupref{inner-var-b}\approx \ \gam y \Big( 1 - \frac{\ln y}{(-\ln \eps)} \Big)^{\frac 13} \ %
    \approx \ \gam y - \frac{\gam y \ln y}{3(-\ln \eps)}, %
\end{align}%
where we used $-\ln \xi \approx -\ln (\eps y) \approx (-\ln \eps) - \ln y$.
Hence $H\approx \gam y$ for $1 \ll y \ll \eps (-\ln \eps)^{-\frac 13}$. Since
$H$ solves \eqref{T5E3}, this also implies $H\approx \gam y$ for $y \lesssim 1$.  To
get the next order correction we write
\begin{align}
  H  \ %
  =: \ \gam y+ \frac 1{(-\ln \eps)}U(y)  \qquad\qquad %
  \text{for $y > 0$.} \label{T5E2}
\end{align}%
Plugging \eqref{T5E2} into \eqref{T5E3} we obtain to the leading order %
\begin{align}
  U_{yyy} \ = \ \frac {\dot s}{\gam^2 y^2+ \gam^{n-1} y^{n-1}} \qquad\qquad %
  \text{for $y > 0$}.
\end{align}%
By assumption, we have $U_{yy} \to 0$ for $y \to \infty$. Integrating we hence
get
\begin{align} \label{eq-Uyy} %
  U_{yy}(y) \ %
  \approx \ - \frac {\dot s}{\gam^2} Q_\gam(y), \qquad
  \text{where } Q_\gam(y) :=  \int_{y}^{\infty }\frac 1{z^2+ \gam^{n-3}z^{n-1}} \ dz \ %
\end{align}
noting that the integral is finite if $n < 3$. Furthermore, 
\begin{align} \label{lob-3} %
  \Big|Q_{\gam }(y) - \frac 1{y} \Big| \ %
  = \ \int_{y}^{\infty }\frac {\gam^{n-3}z^{n-1}}{(z^2+ \gam^{n-3}z^{n-1})z^2} \
  dz \ %
  = \ o(\frac 1{y^2}) \qquad 
  \text{for  $y \gg 1$.}
\end{align}
Integrating \eqref{eq-Uyy} twice using $U(0) = U_y(0) = 0$ yields%
\begin{align} \label{lob-1} %
  U(y) \ %
  &= \ - \frac {\dot s}{\gam^2}\int_{0}^{y}\int_{0}^{z_1}Q_{\gam}(z_2) \ dz_2dz_1+ o(y \ln y) \\
  &= \ - \frac {\dot s}{\gam^2}y\ln y + o(y \ln y) %
  \qquad\qquad \text{for $y\gg 1$}.
\end{align}%
Matching \eqref{lob-1} with \eqref{T4E6} we obtain formula \eqref{dots-formula},
i.e.
\begin{align} \label{dots-formula2} %
  \dot s \ %
  = \   + \lim_{\eps \to 0} \frac {\ang_\eps^3}{3|\ln \eps|} \ %
  = \  + \frac \gam 3. \qquad %
\end{align}

\section{Conclusion} %

We analyze the evolution of thin liquid droplets in the lubrication
approximation with different slip conditions at the liquid--solid
interface. Motivated by the classical no-slip paradox which states that the
Navier–Stokes equations with a no-slip boundary condition require unphysical
infinite dissipation during droplet spreading, we focus on the limit of
vanishing slip.  Our analysis shows that there are three different physically
relevant limits where the solutions approximate qualitatively different
evolutions of the free boundary: We show that in the no--slip limit three
fundamentally different classes of limiting solutions are approached, each of
them corresponding to a different scaling of the microscopic contact angle as
the regularization parameter vanishes:
\begin{itemize}[leftmargin=5ex]
\item[(a)] The first solution class are regular at the contact line and have
  fixed support. This reflects the well known fact that the contact line remains
  fixed for sufficiently regular no-slip solutions.
\item[(b)] For the second solution class, the contact line recedes on a time
  scale of order one while the solution's profile exhibits a characteristic
  logarithmic correction near the moving contact point. Intriguingly, for these
  solutions the dissipation near the contact point diverges, but this divergence
  is balanced by the energetic cost associated with the shrinking solid–liquid
  interface.
\item[(c)] A third class of solutions appears when the contact angle diverges
  even faster in which case the contact line recedes on a much smaller time
  scale, and the mass removed from the support accumulates at the boundary.
\end{itemize}
These findings suggest that the thin-film equation with no slip supports a rich
family of physically admissible solutions, provided one interprets the no--slip
thin film equation as the asymptotic limit of models which regularized slip
conditions. Even though the large apparent contact angles in solutions of type
(b) and (c) seem incompatible with the lubrication approximation, a refined
analysis shows that the underlying physical variables remain consistent with the
assumptions for the lubrication approximation.

\bibliographystyle{plain} %
\bibliography{tfe}

@article {DelgadinoMellet-2021,
    AUTHOR = {Delgadino, Matias G. and Mellet, Antoine},
     TITLE = {On the relationship between the thin film equation and
              {T}anner's law},
   JOURNAL = {Comm. Pure Appl. Math.},
  FJOURNAL = {Communications on Pure and Applied Mathematics},
    VOLUME = {74},
      YEAR = {2021},
    NUMBER = {3},
     PAGES = {507--543},
      ISSN = {0010-3640,1097-0312},
   MRCLASS = {76A20 (74K35)},
  MRNUMBER = {4201292},
MRREVIEWER = {Gabriele\ Bruell},
       DOI = {10.1002/cpa.21946},
       URL = {https://doi.org/10.1002/cpa.21946},
}

@article {EggersFontelos-2025,
    AUTHOR = {Eggers, J. and Fontelos, M. A.},
     TITLE = {Dynamics of a film bounded by a pinned contact line},
   JOURNAL = {{J. Fluid Mech.}},
  FJOURNAL = {Journal of Fluid Mechanics},
    VOLUME = {1008},
      YEAR = {2025},
     PAGES = {Paper No. A47, 27},
      ISSN = {0022-1120,1469-7645},
   MRCLASS = {76A20},
  MRNUMBER = {4888038},
       DOI = {10.1017/jfm.2025.185},
       URL = {https://doi.org/10.1017/jfm.2025.185},
}

@article{GnannPetrache-2018,
    author = {Gnann, M.V. and Petrache, M.},
     title = {The {N}avier-slip thin-film equation for 3{D} fluid films:
              existence and uniqueness},
   journal = {{J. Diff. Eq.}},
  Fjournal = {Journal of Differential Equations},
    volume = {265},
      YEAR = {2018},
    NUMBER = {11},
     pages = {5832--5958},
      ISSN = {0022-0396,1090-2732},
   MRCLASS = {35R35 (35A09 35K25 35K55 35K65 76A20 76D08)},
  MRNUMBER = {3857501},
MRREVIEWER = {Jozil\ O.\ Takhirov},
       DOI = {10.1016/j.jde.2018.07.015},
       URL = {https://doi.org/10.1016/j.jde.2018.07.015},
}

@article{GnannWisse-2025,
    author = {Gnann, M.V. and Wisse,  A.C.},
     title = {Classical solutions to the thin-film equation with general
              mobility in the perfect-wetting regime},
   journal = {{J. Funct. Anal.}},
  Fjournal = {Journal of Functional Analysis},
    volume = 289,
      YEAR = 2025,
    NUMBER = 8,
     pages = {Paper No. 110941, 61},
      ISSN = {0022-1236,1096-0783},
   MRCLASS = {35K65 (35A09 35C07 35K25 35R35 76A20 76D08)},
  MRNUMBER = 4894738,
       DOI = {10.1016/j.jfa.2025.110941},
       URL = {https://doi.org/10.1016/j.jfa.2025.110941},
}

@article{PeschkaHeltai-2022,
  title={Model hierarchies and higher-order discretisation of time-dependent thin-film free boundary problems with dynamic contact angle},
  author={Peschka, D. and Heltai, L.},
  journal={{J. Comp. Phys.}},
  volume=464,
  pages=111325,
  year=2022,
  doi={10.1016/j.jcp.2022.111325}
}

@article{Peschka-2018,
    author = {Peschka, D.},
    title = {Variational approach to dynamic contact angles for thin films},
    journal = {{Phys. Fluids}},
    volume = 30,
    number = 8,
    pages = 082115,
    year = 2018,
    doi = {10.1063/1.5040985},
    url = {https://doi.org/10.1063/1.5040985},
    eprint = {https://pubs.aip.org/aip/pof/article-pdf/doi/10.1063/1.5040985/13687978/082115_1_online.pdf},
}

@article{Gruen-2003,
    author = {Gr\"un, G.},
     title = {Droplet spreading under weak slippage: a basic result on
              finite speed of propagation},
   journal = {{SIAM J. Math. Anal.}},
  Fjournal = {SIAM Journal on Mathematical Analysis},
    volume = {34},
      YEAR = {2003},
    NUMBER = {4},
     pages = {992--1006},
      ISSN = {0036-1410,1095-7154},
   MRCLASS = {76A20 (35K30 35K55 35R35 76D08)},
  MRNUMBER = {1969611},
MRREVIEWER = {Thomas\ P.\ Witelski},
       DOI = {10.1137/S0036141002403298},
       URL = {https://doi.org/10.1137/S0036141002403298},
}

@article{MatiocProkert-2012,
    author = {Matioc, B. and Prokert, G.},
     title = {Hele-{S}haw flow in thin threads: a rigorous limit result},
   journal = {{Int. Free Bound.}},
    volume = {14},
      YEAR = {2012},
    NUMBER = {2},
     pages = {205--230},
      ISSN = {1463-9963,1463-9971},
   MRCLASS = {35R37 (76D08 76D27)},
  MRNUMBER = {2956325},
       DOI = {10.4171/IFB/280},
       URL = {https://doi.org/10.4171/IFB/280},
}

@article{GuentherProkert-2008,
    author = {G\"unther, M. and Prokert, G.},
     title = {A justification for the thin film approximation of {S}tokes
              flow with surface tension},
   journal = {{J. Diff. Eq.}},
  Fjournal = {Journal of Differential Equations},
    volume = 245,
      YEAR = 2008,
    NUMBER = 10,
     pages = {2802--2845},
      ISSN = {0022-0396,1090-2732},
   MRCLASS = {35R35 (35K65 76D07)},
  MRNUMBER = 2454804,
MRREVIEWER = {Jes\'us\ Hern\'andez},
       DOI = {10.1016/j.jde.2008.03.021},
       URL = {https://doi.org/10.1016/j.jde.2008.03.021},
}

@article{GiacomelliGnannPeschka-2023,
    author = {Giacomelli, L. and Gnann, M.V. and Peschka, D.},
     title = {Droplet motion with contact-line friction: long-time
              asymptotics in complete wetting},
   journal = {{Royal Soc. Proc. A.}},
  Fjournal = {Proceedings A},
    volume = {479},
      YEAR = {2023},
    NUMBER = {2274},
     pages = {Paper No. 20230090, 23},
      ISSN = {1364-5021,1471-2946},
   MRCLASS = {76A20 (35Q35)},
  MRNUMBER = {4615687},
MRREVIEWER = {Agegnehu\ Atena},
       DOI = {10.1098/rspa.2023.0090},
       URL = {https://doi.org/10.1098/rspa.2023.0090},
}

@BOOK{BDQ-Book,
  title = {{Capillarity and Wetting Phenomena}},
  publisher = {Springer Verlag},
  year = {2004},
  author = {Gennes, P. de and Brochard-Wyart, F. and Quere,
	D.}
}

@article{BertozziPugh-1994,
    author = {Bertozzi, A. L. and Pugh, M.},
     title = {The lubrication approximation for thin viscous films: the
              moving contact line with a ``porous media'' cut-off of van der
              {W}aals interactions},
   journal = {{Nonlinearity}},
  Fjournal = {{Nonlinearity}},
    volume = {7},
      YEAR = {1994},
    NUMBER = {6},
     pages = {1535--1564},
      ISSN = {0951-7715,1361-6544},
   MRCLASS = {76D10 (35Q35 76D08)},
  MRNUMBER = {1304438},
MRREVIEWER = {Peter\ A.\ McCoy},
       URL = {http://stacks.iop.org/0951-7715/7/1535},
}

@article{OronBankoff-2001,
    author = {Oron, A. and Bankoff, S.G.},
    title = {Dynamics of a condensing liquid film under conjoining/disjoining pressures},
    journal = {{Physics of Fluids}},
    volume = {13},
    number = {5},
    pages = {1107-1117},
    year = {2001},
}

@article{WitelskiBernoff-1999,
    author = {Witelski, T.P. and Bernoff, A.J.},
     title = {Stability of self-similar solutions for van der {W}aals driven
              thin film rupture},
   journal = {{Phys. Fluids}},
  Fjournal = {{Physics of Fluids}},
    volume = {11},
      YEAR = {1999},
    NUMBER = {9},
     pages = {2443--2445},
      ISSN = {1070-6631,1089-7666},
   MRCLASS = {76D45 (76A20 76E99)},
  MRNUMBER = {1707809},
       DOI = {10.1063/1.870138},
       URL = {https://doi.org/10.1063/1.870138},
}

@article{ThompsonTroian-1997,
  author       = {Thompson, P. and Troian, S.},
  journal      = {{Nature}},
  volume       = {{389}},
  pages        = {{360--362}},
  title        = {{A general boundary condition for liquid flow at solid surfaces.}},
  year         = {{1997}},
}

@article {MuenchWagnerWitelski-2025,
    AUTHOR = {M\"unch, A. and Wagner, B. and Witelski, T. P.},
     TITLE = {Lubrication models with small to large slip lengths},
   JOURNAL = {J. Engrg. Math.},
  FJOURNAL = {Journal of Engineering Mathematics},
    VOLUME = {53},
      YEAR = {2005},
    NUMBER = {3-4},
     PAGES = {359--383},
      ISSN = {0022-0833,1573-2703},
   MRCLASS = {76D08 (35Q35 76A20 76D05)},
  MRNUMBER = {2230116},
MRREVIEWER = {Herv\'e\ V.\ Le Meur},
       DOI = {10.1007/s10665-005-9020-3},
       URL = {https://doi.org/10.1007/s10665-005-9020-3},
}

@article{KingTaranets-2013,
    author = {King, J.R. and Taranets, R.M.},
     title = {Asymmetric travelling waves for the thin film equation},
   journal = {{J. Math. Anal. Appl.}},
  Fjournal = {Journal of Mathematical Analysis and Applications},
    volume = {404},
      YEAR = {2013},
    NUMBER = {2},
     pages = {399--419},
      ISSN = {0022-247X,1096-0813},
   MRCLASS = {35K25 (35B45 35C07 35K59 35K65)},
  MRNUMBER = {3045182},
       DOI = {10.1016/j.jmaa.2013.03.047},
       URL = {https://doi.org/10.1016/j.jmaa.2013.03.047},
}

@article{DareiotisEtal-2021,
    author = {Dareiotis, K. and Gess, B. and Gnann, M.
              V. and Gr\"{u}n, G.},
     title = {Non-negative {M}artingale solutions to the stochastic
              thin-film equation with nonlinear gradient noise},
   journal = {{Arch. Rat. Mech. Anal.}},
  Fjournal = {Archive for Rational Mechanics and Analysis},
    volume = 242,
      YEAR = 2021,
    NUMBER = 1,
     pages = {179--234},
      ISSN = {0003-9527,1432-0673},
   MRCLASS = {60H15 (74K35 74S60)},
  MRNUMBER = 4302759,
       DOI = {10.1007/s00205-021-01682-z},
       URL = {https://doi.org/10.1007/s00205-021-01682-z},
}

@article{GnannWisse-2022,
    author = {Gnann, M.V. and Wisse, A.C.},
     title = {The {C}ox-{V}oinov law for traveling waves in the partial
              wetting regime},
   journal = {{Nonlinearity}},
  Fjournal = {{Nonlinearity}},
    volume = {35},
      YEAR = {2022},
    NUMBER = {7},
     pages = {3560--3592},
      ISSN = {0951-7715,1361-6544},
   MRCLASS = {35C07 (34B40 76D08)},
  MRNUMBER = {4452309},
       DOI = {10.1088/1361-6544/ac6373},
       URL = {https://doi.org/10.1088/1361-6544/ac6373},
}

@article{FischerGruen-2018,
    author = {Fischer, J. and Gr\"{u}n, G.},
     title = {Existence of positive solutions to stochastic thin-film
              equations},
   journal = {{SIAM J. Math. Anal.}},
  Fjournal = {SIAM Journal on Mathematical Analysis},
    volume = {50},
      YEAR = {2018},
    NUMBER = {1},
     pages = {411--455},
      ISSN = {0036-1410,1095-7154},
   MRCLASS = {60H15 (35G20 35K25 35K65 35Q35 76A20 76D08)},
  MRNUMBER = {3755665},
MRREVIEWER = {Kazuo\ Yamazaki},
       DOI = {10.1137/16M1098796},
       URL = {https://doi.org/10.1137/16M1098796},
}

@article{GiacomelliKnuepferOtto-2008,
    author = {Giacomelli, L. and Kn{\"u}pfer, H. and Otto, F.},
     title = {Smooth zero-contact-angle solutions to a thin-film equation
              around the steady state},
   journal = {{J. Diff. Eq.}},
  Fjournal = {Journal of Differential Equations},
    volume = {245},
      YEAR = {2008},
    OPTNUMBER = {6},
     pages = {1454--1506},
}

@article{GessGnann-2020,
    author = {Gess, B. and Gnann, M.},
     title = {The stochastic thin-film equation: existence of nonnegative
              martingale solutions},
   journal = {{Stoch.  Process. Appl.}},
  Fjournal = {Stochastic Processes and their Applications},
    volume = {130},
      YEAR = {2020},
    NUMBER = {12},
     pages = {7260--7302},
      ISSN = {0304-4149,1879-209X},
   MRCLASS = {60H15},
  MRNUMBER = {4167206},
MRREVIEWER = {Slobodanka\ S.\ Mitrovi\'{c}},
       DOI = {10.1016/j.spa.2020.07.013},
       URL = {https://doi.org/10.1016/j.spa.2020.07.013},
}

@unpublished{GKV-preprint,
  title =	 {Solutions of the thin--film equation in the no slip case},
  author =	 {Giacomelli, L. and Knüpfer, H.  and Vel\'{a}zquez J.},
  note =	 {In preparation},
}

@unpublished{paper2,
    title = {Solutions to some {ODEs} related to the thin--film equation},
    author = {Knüpfer, H. and Vel\'{a}zquez, J.J.L.},
    note = {In preparation},
}

@article{BGKMRS-2024b,
    author = {Bravin, Marco and Gnann, Manuel V. and Kn\"upfer, Hans and
              Masmoudi, Nader and Roodenburg, Floris B. and Sauer, Jonas},
     title = {Well-posedness and regularity of the heat equation with
              {R}obin boundary conditions in the two-dimensional wedge},
   journal = {{Comm. Part. Diff. Eq.}},
    volume = {50},
      YEAR = {2025},
    NUMBER = {9},
}

@unpublished{Lorentz-Center,

  title =	 {Discussion during a conference at the {L}orentz {C}enter, {L}eiden 2023},
  note = {organized by L. Giacomelli, M. Gnann, C. Lienstromberg and S. Sonner},
}

@article{RednikovColinet-2020,
Author = {Rednikov, A. and Colinet, P.},
Title = {Contact angles for perfectly wetting pure liquids evaporating into air:
   Between de {G}ennes-type and other classical models},
Journal = {{Phys. Rev. Fluids}},
Year = {2020},
Volume = {5},
Number = {11},
DOI = {10.1103/PhysRevFluids.5.114007},
Article-Number = {114007},
ISSN = {2469-990X},
ORCID-Numbers = {Colinet, Pierre/0000-0002-4224-3946
   Rednikov, Alexey/0000-0003-4221-6961},
Unique-ID = {WOS:000594139800004},
}

@article{BoattoEtal-1993,
  title = {Traveling-wave solutions to thin-film equations},
  author = {Boatto, S. and Kadanoff, L. and Olla, P.},
  journal = {{Phys. Rev. E}},
  volume = {48},
  issue = {6},
  pages = {4423--4431},
  numpages = {0},
  year = {1993},
  publisher = {American Physical Society},
  doi = {10.1103/PhysRevE.48.4423},
  url = {https://link.aps.org/doi/10.1103/PhysRevE.48.4423}
}

@article{SnoeijerAndreotti-2013,
  author  = {Snoeijer, J. and Andreotti, B.},
  journal = {{Ann. Rev.  Fluid Mech.}},
  title   = {Moving contact lines: scales, regimes, and dynamical transitions},
  year    = {2013},
  pages   = {269--292},
  volume  = {45},
  doi     = {10.1140/epjst/e2020-900236-8},
}

@article{BonnEggersEtal-2009,
Author = {Bonn, D. and Eggers, J. and Indekeu, J. and Meunier, J.
   and Rolley, E.},
Title = {Wetting and spreading},
Journal = {{Rev. Modern Phys.}},
Year = 2009,
Volume = 81,
Number = 2,
Pages = {739-805},
DOI = {10.1103/RevModPhys.81.739},
ISSN = {0034-6861},
EISSN = {1539-0756},
Unique-ID = {WOS:000267197500008},
}

@article{DurastantiGiacomelli-2024,
    author = {Durastanti, R. and Giacomelli, L.},
     title = {Thin-film equations with singular potentials: an alternative
              solution to the contact-line paradox},
   journal = {{J. Nonl. Sc.}},
  Fjournal = {Journal of Nonlinear Science},
    volume = 34,
      YEAR = 2024,
    NUMBER = 1,
     pages = {11 (31)},
      ISSN = {0938-8974,1432-1467},
   MRCLASS = {35C07 (34C60 34E05 35G20 35K65 76A20 76D08)},
  MRNUMBER = 4664562,
       DOI = {10.1007/s00332-023-09982-2},
       URL = {https://doi.org/10.1007/s00332-023-09982-2},
}

@article{Peschka-2015,
    author = {Peschka, D.},
     title = {Thin-film free boundary problems for partial wetting},
   journal = {{J. Comput. Phys.}},
  Fjournal = {{Journal of Computational Physics}},
    volume = {295},
      YEAR = {2015},
     pages = {770--778},
      ISSN = {0021-9991,1090-2716},
   MRCLASS = {76A20 (65M60 76M10)},
  MRNUMBER = {3345234},
       DOI = {10.1016/j.jcp.2015.04.041},
       URL = {https://doi.org/10.1016/j.jcp.2015.04.041},
}

@article{CarrilloToscani-2002,
  author     = {Carrillo, J. and Toscani, G.},
  journal    = {{Comm. Math. Phys.}},
  title      = {Long-time asymptotics for strong solutions of the thin film equation},
  year       = {2002},
  issn       = {0010-3616,1432-0916},
  number     = {3},
  pages      = {551--571},
  volume     = {225},
  doi        = {10.1007/s002200100591},
  fjournal   = {Communications in Mathematical Physics},
  mrclass    = {35K57 (35G20 74A20)},
  mrnumber   = {1888873},
  mrreviewer = {Alan\ Jeffrey},
  url        = {https://doi.org/10.1007/s002200100591},
}

@article{DuffyWilson-1997,
  author   = {Duffy, B. and Wilson, S.},
  journal  = {{Appl. Math. Lett.}},
  title    = {A third-order differential equation arising in thin-film flows and relevant to {T}anner's Law},
  year     = {1997},
  issn     = {0893-9659},
  number   = {3},
  pages    = {63-68},
  volume   = {10},
  abstract = {In this paper, we describe for the first time the properties of the general solution to the third-order ordinary differential equation y′'' = y−2 which is important in the study of thin viscous films with surface tension. This solution is then used to solve exactly a problem relevant to Tanner's Law for the speed of a moving three-phase contact line.},
  doi      = {https://doi.org/10.1016/S0893-9659(97)00036-0},
  keywords = {Thin-film flows, Moving three-phase contact line, Tanner's Law},
  url      = {https://www.sciencedirect.com/science/article/pii/S0893965997000360},
}

@article{Eggers-2004,
  author   = {Eggers, J.},
  journal  = {{Physics of Fluids}},
  title    = {{Toward a description of contact line motion at higher capillary numbers}},
  year     = {2004},
  issn     = {1070-6631},
  number   = {9},
  pages    = {3491-3494},
  volume   = {16},
  abstract = {{The surface of a liquid near a moving contact line is highly curved owing to diverging viscous forces. Thus, microscopic physics must be invoked at the contact line and matched to the hydrodynamic solution farther away. This matching has already been done for a variety of models, but always assuming the limit of vanishing speed. This excludes phenomena of the greatest current interest, in particular the stability of contact lines. Here we extend perturbation theory to arbitrary order and compute finite speed corrections to existing results. We also investigate the impact of different contact line models on the large-scale shape of the interface.}},
  doi      = {10.1063/1.1776071},
  eprint   = {https://pubs.aip.org/aip/pof/article-pdf/16/9/3491/19135653/3491\_1\_online.pdf},
  url      = {https://doi.org/10.1063/1.1776071},
}

@article{EggersStone-2004,
  author  = {Eggers, J. and Stone, H.},
  journal = {{J.  Fluid Mech.}},
  title   = {Characteristic lengths at moving contact lines for a perfectly wetting fluid: the influence of speed on the dynamic contact angle},
  year    = {2004},
  pages   = {309–321},
  volume  = {505},
  doi     = {10.1017/S0022112004008663},
}

@article{Esselborn-2016,
  author   = {Esselborn, E.},
  journal  = {{SIAM J. Math. Anal.}},
  title    = {Relaxation Rates for a Perturbation of a Stationary Solution to the Thin-Film Equation},
  year     = {2016},
  number   = {1},
  pages    = {349-396},
  volume   = {48},
  abstract = {In this work we study the stability of a stationary solution to the thin-film equation with linear mobility and partial wetting boundary conditions. The method used is strongly based on the gradient-flow structure of the problem. We obtain natural relaxation rates of perturbations to the stationary solution by showing that the energy is in fact convex in a neighborhood around the stationary solution.},
  doi      = {10.1137/15M1017697},
  eprint   = {https://doi.org/10.1137/15M1017697},
  url      = {https://doi.org/10.1137/15M1017697},
}

@article{GiacomelliGnannOtto-2016,
  author    = {L. Giacomelli and M.V. Gnann and F. Otto},
  journal   = {{Nonlinearity}},
  title     = {Rigorous asymptotics of traveling-wave solutions to the thin-film equation and Tanner’s law},
  year      = {2016},
  number    = {9},
  pages     = {2497},
  volume    = {29},
  abstract  = {We are interested in traveling-wave solutions to the thin-film equation with zero microscopic contact angle (in the sense of complete wetting without precursor) and inhomogeneous mobility , where h, λ, and  denote film height, slip parameter, and mobility exponent, respectively. Existence and uniqueness of these solutions have been established by Maria Chiricotto and the first of the authors in previous work under the assumption of sub-quadratic growth as .

In the present work we investigate the asymptotics of solutions as  (the contact-line region) and . As  we observe, to leading order, the same asymptotics as for traveling waves or source-type self-similar solutions to the thin-film equation with homogeneous mobility hn and we additionally characterize corrections to this law. Moreover, as  we identify, to leading order, the logarithmic Tanner profile, i.e. the solution to the corresponding unperturbed problem with  that determines the apparent macroscopic contact angle. Besides higher-order terms, corrections turn out to affect the asymptotic law as  only by setting the length scale in the logarithmic Tanner profile. Moreover, we prove that both the correction and the length scale depend smoothly on n. Hence, in line with the common philosophy, the precise modeling of liquid–solid interactions (within our model, the mobility exponent) does not affect the qualitative macroscopic properties of the film.},
  doi       = {10.1088/0951-7715/29/9/2497},
  publisher = {IOP Publishing},
  url       = {https://dx.doi.org/10.1088/0951-7715/29/9/2497},
}

@article{GiacomelliOtto-2002,
  author   = {Giacomelli, L. and Otto, F.},
  journal  = {{Comm. Pure Appl. Math.}},
  title    = {Droplet spreading: Intermediate scaling law by PDE methods},
  year     = {2002},
  number   = {2},
  pages    = {217-254},
  volume   = {55},
  abstract = {Abstract We consider the spreading of a thin droplet of viscous liquid on a plane surface driven by capillarity. The standard lubrication approximation leads to an evolution equation for the film height h that is ill-posed when the spreading is limited by the no-slip boundary condition at the liquid-solid interface due to a singularity at the moving contact line. The most common relaxation of the no-slip boundary condition removes this singularity but introduces a new physical length scale: the slippage length b. It is believed that this microscopic-length scale only enters logarithmically in the effective (that is, macroscopic) spreading behavior. In this paper, we rigorously show that the naively expected spreading rate is indeed only altered by a logarithmic term involving b. More precisely, we prove a scaling law for the diameter of the apparent (that is, macroscopic) support of the droplet in time. This is an intermediate scaling law: It takes an initial layer to “forget” the initial droplet shape, whereas after a long time, the droplet is so thin that its spreading is governed by the physics on the scale b. Our proof works by deriving suitable estimates for physically relevant integral quantities: the free energy, the length of the apparent support, and their respective rates of change. As opposed to matched asymptotic methods, this PDE approach closely mimics a simple heuristic argument based on the gradient flow structure. © 2002 John Wiley \& Sons, Inc.},
  doi      = {https://doi.org/10.1002/cpa.10017},
  eprint   = {https://onlinelibrary.wiley.com/doi/pdf/10.1002/cpa.10017},
  url      = {https://onlinelibrary.wiley.com/doi/abs/10.1002/cpa.10017},
}

@article{Gnann-2015,
  author   = {Gnann, M.},
  journal  = {{SIAM J. Math. Anal.}},
  title    = {Well-Posedness and Self-Similar Asymptotics for a Thin-Film Equation},
  year     = {2015},
  number   = {4},
  pages    = {2868-2902},
  volume   = {47},
  doi      = {10.1137/14099190X},
  eprint   = {https://doi.org/10.1137/14099190X},
  url      = {https://doi.org/10.1137/14099190X},
}

@article{Hocking-1983,
  author   = {Hocking, L.},
  journal  = {{Quart. J.  Mech.  Appl. Math.}},
  title    = {{The Spreading of a Thin Drop by Gravity and Capillarity}},
  year     = {1983},
  issn     = {0033-5614},
  number   = {1},
  pages    = {55-69},
  volume   = {36},
  doi      = {10.1093/qjmam/36.1.55},
  eprint   = {https://academic.oup.com/qjmam/article-pdf/36/1/55/5280082/36-1-55.pdf},
  url      = {https://doi.org/10.1093/qjmam/36.1.55},
}

@article{Hocking-1992,
  author   = {Hocking, L.},
  journal  = {{J. Fluid Mech.}},
  title    = {Rival contact-angle models and the spreading of drops},
  year     = {1992},
  issn     = {0022-1120,1469-7645},
  pages    = {671--681},
  volume   = {239},
  doi      = {10.1017/S0022112092004579},
  fjournal = {Journal of Fluid Mechanics},
  mrclass  = {76D45},
  mrnumber = {1169584},
  url      = {https://doi.org/10.1017/S0022112092004579},
}

@article{Shikhmurzaev-2020,
  author  = {Shikhmurzaev, Y.},
  journal = {{Eur. Phys. J. Spec. Top.}},
  title   = {Moving contact lines and dynamic contact angles: a ‘litmus test’ for mathematical models, accomplishments and new challenges},
  year    = {2020},
  pages   = {1945-1977},
  volume  = {229},
  doi     = {10.1140/epjst/e2020-900236-8},
}

@article{Tanner-1979,
  author   = {Tanner, L.},
  journal  = {{J. Phys. D: Appl. Phys.}},
  title    = {The spreading of silicone oil drops on horizontal surfaces},
  year     = {1979},
  number   = {9},
  pages    = {1473},
  volume   = {12},
  doi      = {10.1088/0022-3727/12/9/009},
  url      = {https://dx.doi.org/10.1088/0022-3727/12/9/009},
}

@article{Voinov-1977,
  author  = {Voinov, O.},
  journal = {{J.  Appl. Mech. and Tech. Phys.}},
  title   = {Inclination angles of the boundary in moving liquid layers},
  year    = {1977},
  issn    = {1573-8620},
  number  = {2},
  pages   = {216-222},
  volume  = {18},
  day     = {01},
  doi     = {10.1007/BF00859809},
  url     = {https://doi.org/10.1007/BF00859809},
}

@article{Cox-1986,
title={The dynamics of the spreading of liquids on a solid surface. Part 1. Viscous flow},
volume={168},
DOI={10.1017/S0022112086000332},
journal={{J. Fluid Mech.}},
author={Cox, R.},
year={1986},
pages={169–194}
}

@article{KnuepferMasmoudi-2015,
    author = {Kn\"upfer, H. and Masmoudi, N.},
     title = {Darcy's flow with prescribed contact angle: well-posedness and
              lubrication approximation},
   journal = {{Arch. Rat. Mech. Anal.}},
  Fjournal = {Archive for Rational Mechanics and Analysis},
    volume = {218},
      YEAR = {2015},
    NUMBER = {2},
     pages = {589--646},
      ISSN = {0003-9527},
   MRCLASS = {35R35 (35B30 35K25 35K65 35Q35 76D27)},
  MRNUMBER = {3375536},
MRREVIEWER = {Ben W. Schweizer},
       DOI = {10.1007/s00205-015-0868-8},
       URL = {http://dx.doi.org/10.1007/s00205-015-0868-8},
  keywords = {project}
}

@article{KnuepferMasmoudi-2013,
    author = {Kn{\"u}pfer, H. and Masmoudi, N.},
     title = {Well-posedness and uniform bounds for a nonlocal third order
              evolution operator on an infinite wedge},
   journal = {{Comm. Math. Phys.}},
  Fjournal = {Communications in Mathematical Physics},
    volume = {320},
      YEAR = {2013},
    NUMBER = {2},
     pages = {395--424},
      ISSN = {0010-3616},
   MRCLASS = {35Q53 (35B30 35K25 35K65 35Q35 76T30)},
  MRNUMBER = {3053766},
MRREVIEWER = {John Albert},
       DOI = {10.1007/s00220-013-1708-z},
       URL = {http://dx.doi.org/10.1007/s00220-013-1708-z},
  keywords = {project}
}

@ARTICLE{Bernis-1996-1,
  author = {Bernis, F.},
  title = {Finite speed of propagation and continuity of the interface for thin
	viscous flows},
  journal = {{Adv. Diff. Eq.}},
  year = 1996,
  volume = 1,
  pages = {337--368},
  number = 3,
  fjournal = {Advances in Differential Equations},
}

@article{HulshofShishkov-1998,
    author = {Hulshof, J. and Shishkov, A.E.},
     title = {The thin film equation with {$2\leq n<3$}: finite speed of
              propagation in terms of the {$L^1$}-norm},
   journal = {{Adv. Diff. Eq.}},
  Fjournal = {Advances in Differential Equations},
    volume = {3},
      YEAR = {1998},
    OPTNUMBER = {5},
     pages = {625--642},
 }

@article{BloweyKingLangdon-2007,
    author = {Blowey, J. F. and King, J. R. and Langdon, S.},
     title = {Small- and waiting-time behavior of the thin-film equation},
   journal = {{SIAM J. Appl. Math.}},
  Fjournal = {SIAM Journal on Applied Mathematics},
    volume = {67},
      YEAR = {2007},
    OPTNUMBER = {6},
     pages = {1776--1807},
}

@article{BertschDalpassoGarckeGruen-1998,
    author = {Bertsch, M. and Dal Passo, R. and Garcke, H. and Gr{\"u}n, G.},
     title = {The thin viscous flow equation in higher space dimensions},
   journal = {{Adv. Diff. Eq.}},
  Fjournal = {Advances in Differential Equations},
    volume = {3},
      YEAR = {1998},
    OPTNUMBER = {3},
     pages = {417--440},
}

@article{DalpassoGarckeGruen-1998,
    author = {Dal Passo, R. and Garcke, H. and Gr{\"u}n, G.},
     title = {On a fourth-order degenerate parabolic equation: global
              entropy estimates, existence, and qualitative behavior of
              solutions},
   journal = {{SIAM J. Math. Anal.}},
  Fjournal = {SIAM Journal on Mathematical Analysis},
    volume = {29},
      YEAR = {1998},
    OPTNUMBER = {2},
     pages = {321--342 (electronic)},
}

@ARTICLE{Greenspan-1978,
  author = {Greenspan, H.},
  title = {Motion of a small viscous droplet that wets a surface},
  journal = {{J. Fluid Mech.}},
  year = 1978,
  volume = 84,
  pages = {125--143},
 }

@ARTICLE{BerettaBertschDalpasso-1995,
  author = {Beretta, E. and Bertsch, M. and Dal Passo, R.},
  title = {Nonnegative solutions of a fourth-order nonlinear degenerate parabolic
	equation},
  journal = {{Arch. Rat. Mech. Anal.}},
  year = 1995,
  volume = 129,
  pages = {175--200},
  OPTnumber = 2,
  fjournal = {Archive for Rational Mechanics and Analysis},
}

@article{BarrettBloweyGarcke-1998,
    author = {Barrett, J. and Blowey, J. and Garcke, H.},
     title = {Finite element approximation of a fourth order nonlinear
              degenerate parabolic equation},
   journal = {{Num. Math.}},
  OPTFjournal = {Numerische Mathematik},
    volume = {80},
      YEAR = {1998},
    NUMBER = {4},
     pages = {525--556},
}

@ARTICLE{GiacomelliShishkov-2005,
  author = {Giacomelli, L. and Shishkov, A.E.},
  title = {Propagation of support in one-dimensional convected thin-film flow},
  journal = {{Indiana Univ. Math. J.}},
  year = {2005},
  volume = {54},
  pages = {1181--1215},
  number = {4},
  OPTfjournal = {Indiana University Mathematics Journal},
}

@ARTICLE{GiacomelliOtto-2003,
  author = {Giacomelli, L. and Otto, F.},
  title = {Rigorous lubrication approximation},
  journal = {{Interf. Free Bound.}},
  year = {2003},
  volume = {5},
  pages = {483--529},
  OPTnumber = {4},
  fjournal = {Interfaces and Free Boundaries. Mathematical Modelling, Analysis and Computation},
}

@book{BenderOrszag-Book,
  author =	 {Bender, C. and Orszag, S.},
  title =	 {{Advanced math. methods for scientists and engineers 1:
                  Asymptotic methods and perturbation theory}},
  year =	 {1999},
  publisher =	 {Springer},
}

@article{BowenKing-2001,
    author = {King, J.R. and Bowen, M.},
     title = {Moving boundary problems and non-uniqueness for the thin film
              equation},
   journal = {{Europ. J. Appl. Math.}},
  Fjournal = {European Journal of Applied Mathematics},
    volume = {12},
      year = {2001},
    number = {3},
     pages = {321--356},
}

@ARTICLE{Gruen-2004-1,
  author = {Gr{\"u}n, G.},
  title = {Droplet spreading under weak slippage---existence for the {C}auchy
	problem},
  journal = {{Comm. Partial Diff. Eq.}},
  year = {2004},
  volume = {29},
  pages = {1697--1744},
  number = {11-12},
  coden = {CPDIDZ},
  fjournal = {Communications in Partial Differential Equations},
  issn = {0360-5302},
  owner = {knuepfer},
  timestamp = {2011.11.11}
}

@article{GuoTice-2018,
    author = {Guo, Y. and Tice, I.},
     title = {Stability of contact lines in fluids: 2{D} {S}tokes flow},
   journal = {{Arch. Rat. Mech. Anal.}},
  Fjournal = {Archive for Rational Mechanics and Analysis},
    volume = {227},
      YEAR = {2018},
    NUMBER = {2},
     pages = {767--854},
      ISSN = {0003-9527,1432-0673},
   MRCLASS = {76D07 (35Q35)},
  MRNUMBER = {3740388},
MRREVIEWER = {Gudrun\ Th\"ater},
       DOI = {10.1007/s00205-017-1174-4},
       URL = {https://doi.org/10.1007/s00205-017-1174-4},
}

@article{LeoniTice-2023,
    author = {Leoni, G. and Tice, I.},
     title = {Traveling wave solutions to the free boundary incompressible
              {N}avier-{S}tokes equations},
   journal = {{Comm. Pure Appl. Math.}},
  Fjournal = {Communications on Pure and Applied Mathematics},
    volume = {76},
      YEAR = {2023},
    NUMBER = {10},
     pages = {2474--2576},
      ISSN = {0010-3640,1097-0312},
   MRCLASS = {76D05 (76D03)},
  MRNUMBER = {4630597},
MRREVIEWER = {Changxing\ Miao},
       DOI = {10.1002/cpa.22084},
       URL = {https://doi.org/10.1002/cpa.22084},
}

@ARTICLE{BernisFriedman-1990,
  author = {Bernis, F. and Friedman, A.},
  title = {Higher order nonlinear degenerate parabolic equations},
  journal = {{J. Diff. Eq.}},
  year = {1990},
  volume = {83},
  pages = {179--206},
  OPTnumber = {1},
  fjournal = {Journal of Differential Equations},
}

@ARTICLE{Bernis-1996-2,
  author = {Bernis, F.},
  title = {Finite speed of propagation for thin viscous flows when {$2\leq n<3$}},
  journal = {{C. R. Acad. Sci. Paris S\'er. I Math.}},
  year = {1996},
  volume = {322},
  pages = {1169--1174},
  OPTnumber = {12},
  fjournal = {Comptes Rendus de l'Acad\'emie des Sciences. S\'erie I. Math\'ematique},
}

@ARTICLE{DalpassoGiacomelliGruen-2001,
  author = {Dal Passo, R. and Giacomelli, L. and Gr{\"u}n, G.},
  title = {A waiting time phenomenon for thin film equations},
  journal = {{Ann. Sc. Norm. Sup. Pisa Cl. Sci.}},
  year = {2001},
  volume = {30},
  pages = {437--463},
  OPTnumber = {2},
  fjournal = {Annali della Scuola Normale Superiore di Pisa. Classe di Scienze.
	Serie IV},
}

@article{Shishkov2007,
    author = {{Shishkov, A.E.}},
     title = {Waiting time of propagation and the backward motion of
              interfaces in thin-film flow theory},
   journal = {{Discr. Cont. Dyn. Syst.}},
      year = {2007},
     pages = {938--945},
}

@article{Fischer-2014,
    author = {Fischer, J.},
     title = {Upper bounds on waiting times for the thin-film equation: the
              case of weak slippage},
   journal = {{Arch. Rat. Mech. Anal.}},
  Fjournal = {Archive for Rational Mechanics and Analysis},
    volume = 211,
      YEAR = 2014,
    NUMBER = 3,
     pages = {771--818},
 }

@ARTICLE{BertschGiacomelliKarali-2005,
  author = {Bertsch, M. and Giacomelli, L. and Karali, G.},
  title = {Thin-film equations with ``partial wetting'' energy: existence of
	weak solutions},
  journal = {{Phys. D}},
  year = 2005,
  volume = 209,
  pages = {17--27},
  OPTnumber = {1-4},
  fjournal = {Physica D. Nonlinear Phenomena},
}

@ARTICLE{OronDavisBankoff-1997,
  author = {Oron, A. and Davis, S. and Bankoff, S.},
  title = {{Long-scale evolution of thin liquid films}},
  journal = {{Rev. Mod. Phys.}},
  year = {1997},
  volume = {69},
  pages = {931--980},
  number = {3},
}

@ARTICLE{Otto-1998,
  author = {Otto, F.},
  title = {Lubrication approximation with prescribed nonzero contact angle},
  journal = {{Comm. Part. Diff. Eq.}},
  year = 1998,
  volume = 23,
  pages = {2077--2164},
  OPTnumber = {11-12},
  fjournal = {Communications in Partial Differential Equations},
 }

@article{GiacomelliGnannKnuepferOtto-2014,
    author = {Giacomelli, L. and Gnann, M. V. and Kn{\"u}pfer, H. and Otto, F.},
     title = {Well-posedness for the {N}avier-slip thin-film equation in the
              case of complete wetting},
   journal = {{J. Diff. Eq.}},
  Fjournal = {Journal of Differential Equations},
    volume = {257},
      YEAR = {2014},
    OPTNUMBER = {1},
     pages = {15--81},
}

@article{GiacomelliKnuepfer-2010,
    author = {Giacomelli, L. and Kn{\"u}pfer, H.},
     title = {A free boundary problem of fourth order: classical solutions
              in weighted {H}\"older spaces},
   journal = {{Comm. Part. Diff. Eq.}},
  Fjournal = {Communications in Partial Differential Equations},
    volume = {35},
      YEAR = {2010},
    OPTNUMBER = {11},
     pages = {2059--2091},
}

@ARTICLE{HuhScriven-1971,
  author = {Huh, C. and Scriven, L.E.},
  title = {Hydrodynamic model of steady movement of a solid-liquid-fluid contact
	line},
  journal = {{J. Coll. Int. Sc.}},
  year = 1971,
  volume = 35,
  pages = {85--101},
 
}

\end{document}